\documentclass[reqno]{amsart}
\usepackage{hyperref}
\usepackage{graphics}
\usepackage{amssymb}
\usepackage{enumerate}
\usepackage{color}
\usepackage[left=3.00cm, right=3.00cm, top=2.00cm, bottom=2.00cm]{geometry}

\allowdisplaybreaks
\begin{document} 

\title[Damped wave equation]{A Fujita-type threshold for the semilinear damped wave equation with Hartree-type nonlinearity and initial data from homogeneous Besov spaces}
\author[P.D. An]{Phan Duc An$^{\natural}$}

\address{Phan Duc An\hfill\break
Department of Mathematics, Banking Academy of Vietnam \hfill\break
12 Chua Boc, Kim Lien, Hanoi, Vietnam}
\email{anpd@hvnh.edu.vn}

\keywords{Damped wave equation, Hartree-type nonlinearity, Homogeneous Besov spaces, Fujita-type threshold, Critical exponent, Global existence and nonexistence.}

\begin{abstract}
In this paper, we consider the semilinear damped wave equation with Hartree-type nonlinearity $\mathcal{I}_\gamma\left(|u|^{p_1}\right)|u|^{p_2}$, where $0<\gamma<n$, with initial data possessing additional negative regularity in $\dot{B}_{2, \infty}^{-\beta}$, $\beta \in [0,n/2)$. We first establish decay estimates for the corresponding linear equation in homogeneous Besov spaces $\dot{B}_{q,r}^{s}$. A key point is that the low-frequency part of the data is measured only in $\dot{B}_{m,\infty}^{-\beta}$, while the heat-like behavior yields the required $\ell^r$ summability under the condition $s+\beta+n(1/m-1/q)>0$. This formulation is particularly suited to the nonlinear analysis. We then establish the existence of a unique global mild solution for sufficiently small initial data whenever
$$
p_1+p_2\geq 1+\frac{4+2\gamma}{n+2\beta},
$$
under the remaining admissibility conditions stated in the existence theorem. In particular, the critical case is covered whenever the critical line satisfies these conditions. Conversely, for initial data satisfying an explicit positive lower bound, the test-function method rules out global weak solutions when
$$
2<p_1+p_2<1+\frac{4+2\gamma}{n+2\beta}.
$$
Thus, whenever the critical line is admissible and the subcritical interval is nonempty, $1+(4+2\gamma)/(n+2\beta)$ gives a Fujita-type threshold for the sum $p_1+p_2$.
\end{abstract}

\maketitle
\numberwithin{equation}{section}
\newtheorem{theorem}{Theorem}[section]
\newtheorem{remark}{Remark}[section]
\newtheorem{definition}{Definition}[section]
\newtheorem{lemma}[theorem]{Lemma}
\newtheorem{corollary}[theorem]{Corollary}
\newtheorem{proposition}[theorem]{Proposition}
\newtheorem{example}[theorem]{Example}

\section{Introduction}
\subsection{Background of this paper}
We begin with the semilinear damped wave equation with power-type nonlinearity
\begin{equation} \label{eq:Matsumura1976}
\begin{cases}
\partial_t^2 u -\Delta u+\partial_t u=|u|^p, & (t, x) \in(0, \infty) \times \mathbb{R}^n, \quad n \in \mathbb{N}^*, \quad p>1, \\ u(0, x)=u_0(x), \quad \partial_t u(0, x)=u_1(x), & x \in \mathbb{R}^n, \quad n \in \mathbb{N}^*.
\end{cases}
\end{equation}
For initial data with additional $L^1$ integrability, global existence and finite-time blow-up for \eqref{eq:Matsumura1976} have been studied extensively in \cite{Ikehata2005, Matsumura1976, Todorova2001, Zhang2001} and related works. The critical behavior with respect to the exponent $p$ is determined by the Fujita exponent $p_{\mathrm{Fuji}}(n) := 1 + 2/n$; see \cite{Ikehata2005, Matsumura1976, Todorova2001, Zhang2001} for details. More precisely:
\begin{itemize}
\item In dimensions $n = 1, 2$, Matsumura \cite{Matsumura1976} proved the global existence of small-data solutions for $p > p_{\mathrm{Fuji}}(n)$.
\item For all $n \geq 1$, Todorova and Yordanov \cite{Todorova2001} established global existence for compactly supported small initial data when $p > p_{\mathrm{Fuji}}(n)$, with the additional restriction $p\leq n/(n-2)$ for $n\geq3$, and proved finite-time blow-up in the subcritical range $1 < p < p_{\mathrm{Fuji}}(n)$.
\item At the critical exponent $p = p_{\mathrm{Fuji}}(n)$, Zhang \cite{Zhang2001} proved blow-up.
\end{itemize}

Subsequent studies considered \eqref{eq:Matsumura1976} under different assumptions on the initial data. In particular, for data with additional $L^m$ integrability, where $m \in [1,2)$, the corresponding modified Fujita exponent is $p_{\mathrm{Fuji}}(n/m):=1+(2m)/n$.

For slowly decaying data with additional $L^m$ integrability, $m \in (1,2)$, the quantity $p_{\mathrm{Fuji}}(n/m)=1+(2m)/n$ appears as the relevant threshold; see, for instance, \cite{Ikehata2002, Ikeda2019}. In particular, the critical power belongs to the small-data global existence range for $m>1$ in the setting of \cite{Ikeda2019}, whereas at the endpoint $m=1$ the classical Fujita critical power lies in the blow-up range. This reflects a basic distinction between the $L^1$ theory and the $L^m$, $m>1$, theory. For data with additional negative Sobolev regularity $\dot{H}^{-\beta}$ with $\beta \in (0, n/2)$, Chen and Reissig \cite{Chen2023}, under suitable restrictions on the dimension and the negative Sobolev order, identified the threshold
$$
p_{\mathrm{crit}}(n,\beta):=1+\frac{4}{n+2\beta}.
$$
They obtained global-in-time existence of small-data solutions in the corresponding supercritical range and finite-time blow-up for $1<p<p_{\mathrm{crit}}(n,\beta)$. The critical case $p=p_{\mathrm{crit}}(n,\beta)$ was subsequently treated by D'Abbicco \cite{DAbbicco2025} under suitable restrictions on the dimension and the negative Sobolev order.

More recently, Tang, Duong and Phan \cite{Loc2026} considered the semilinear damped wave equation with Riesz potential-type power nonlinearity and initial data in pseudo-measure spaces:
\begin{align}\label{Semilinear_Damped_Waves}
\begin{cases}
\partial_t^2 u-\Delta u+\partial_t u=\mathcal{I}_\gamma\left(|u|^p\right), &(t, x) \in(0, \infty) \times \mathbb{R}^n, \quad n \in \mathbb{N}^*,\\
u(0,x)=u_0(x),\ \partial_t u(0,x)=u_1(x),& x \in \mathbb{R}^n, \quad n \in \mathbb{N}^*,
\end{cases}
\end{align}
where $\gamma \in (0, n)$ and $p>1$. The Riesz potential $\mathcal{I}_\gamma$ is given by
\begin{equation} \label{eq:Riesz}
\mathcal{I}_\gamma (f)(x):= \frac{\Gamma\left(\frac{n-\gamma}{2}\right)}{2^\gamma \pi^{\frac{n}{2}} \Gamma\left(\frac{\gamma}{2}\right)}\left(|x|^{-(n-\gamma)} * f\right)= \frac{\Gamma\left(\frac{n-\gamma}{2}\right)}{2^\gamma \pi^{\frac{n}{2}} \Gamma\left(\frac{\gamma}{2}\right)} \int_{\mathbb{R}^n} \displaystyle\frac{f(y)}{|x-y|^{n-\gamma}} d y,
\end{equation}
initially, for instance, for $f \in \mathcal{S}\left(\mathbb{R}^n\right)$, and then extended to the usual admissible classes. Equivalently, $\mathcal{I}_\gamma$ can be viewed as the inverse fractional Laplacian in the sense that
$$
\mathcal{I}_\gamma (f)(x) = (-\Delta)^{-\frac{\gamma}{2}} f(x)=\mathcal{F}^{-1}(|\xi|^{-\gamma} \widehat{f}(\xi))(x).
$$
Throughout, we use the Fourier transform convention $\widehat{f}(\xi):=\int_{\mathbb{R}^n}\mathrm{e}^{-ix\cdot{}\xi}f(x)dx$ and $\mathcal{F}^{-1}(g)(x):=(2\pi)^{-n}\int_{\mathbb{R}^n}\mathrm{e}^{ix\cdot{}\xi}g(\xi)d\xi$.
We refer to \cite{Landkof1972, Stein1970} for standard properties of Riesz potentials. The pseudo-measure spaces are defined by (see, for instance, \cite{Bhattacharya2003})
$$
\mathcal{Y}^q\left(\mathbb{R}^n\right):=\left\{f \in \mathcal{S}'\left(\mathbb{R}^n\right): \widehat{f} \in L_\mathrm{loc}^1\left(\mathbb{R}^n\right) \text{ and } \|f\|_{\mathcal{Y}^q} :=  \sup _{\xi \in \mathbb{R}^n}\left\{|\xi|^q|\widehat{f}(\xi)|\right\}<\infty\right\}, 
$$
where $\mathcal{S}'\left(\mathbb{R}^n\right)$ denotes the space of tempered distributions. Assuming in addition that the initial data belong to $\mathcal{Y}^q$, the authors obtained for \eqref{Semilinear_Damped_Waves} the threshold $p_{\mathrm{Fuji}}((2n-2q)/(2+\gamma)):=1+(2+\gamma)/(n-q)$ with $1 \leq n \leq 4$ and $0 \leq \gamma < q < n/2$.

Hartree-type convolution nonlinearities also arise in parabolic equations with fractional diffusion. A representative model is
\begin{equation}\label{ref_eq}
\begin{cases}
\partial_t u+(-\Delta)^{\sigma/2} u=\left(\mathcal{K}*|u|^{p_1}\right)|u|^{p_2}, &(t, x) \in(0, \infty) \times \mathbb{R}^n, \quad n \in \mathbb{N}^*, \\
u(0, x)=u_0(x), &x \in \mathbb{R}^n, \quad n \in \mathbb{N}^*,
\end{cases}
\end{equation}
where $p_1, p_2 >0$ and $\sigma\in(0,2]$. Here $(-\Delta)^{\sigma/2}$ denotes the fractional Laplacian, while the nonlinear term contains the convolution kernel $\mathcal{K}$. The
function $\mathcal{K}:(0,\infty)\to(0,\infty)$ is assumed to be continuous and locally integrable in the sense that $\mathcal{K}(|\cdot{}|)\in L_\mathrm{loc}^1$, together with a suitable structural condition at infinity. The convolution term is defined by
$$
\left(\mathcal{K} *|u|^{p_1}\right)(x)=\int_{\mathbb{R}^n} \mathcal{K}(|x-y|)|u(y)|^{p_1} d y,
$$
which describes a nonlocal interaction in space. Typical choices of $\mathcal{K}$ include constant kernels and kernels of the form
$$
\mathcal{K}(r)=r^{-\eta}, \quad \text{ with } \eta \in(0, n),
$$
or more generally,
$$
\mathcal{K}(r)=r^{-\eta} \ln^\kappa(1+r), \quad \text{ with } \eta \in(0, n) \text{ and } \kappa>\eta-n.
$$
A particularly relevant choice is
$$
\mathcal{K}(r)=\frac{\Gamma\left(\frac{n-\gamma}{2}\right)}{2^\gamma \pi^{\frac{n}{2}} \Gamma\left(\frac{\gamma}{2}\right)} r^{-(n-\gamma)}, \quad \text{ with } \gamma \in(0, n)
$$
for which the convolution operator is exactly the Riesz potential
$$
\mathcal{I}_\gamma(f)(x)=\frac{\Gamma\left(\frac{n-\gamma}{2}\right)}{2^\gamma \pi^{\frac{n}{2}} \Gamma\left(\frac{\gamma}{2}\right)} \int_{\mathbb{R}^n} \frac{f(y)}{|x-y|^{n-\gamma}} d y.
$$
Fino and Torebek \cite{Fino_Hartree} studied the global behavior of solutions to the semilinear parabolic problem \eqref{ref_eq} with fractional diffusion and nonlocal convolution nonlinearities, with particular attention to global existence and blow-up. In their existence theory for the Riesz-potential case, they assume $p_1>1$ and $p_2\geq1$, whereas some of their nonexistence results allow positive exponents. The stationary cases of \eqref{ref_eq} with $\mathcal{K}(r)=r^{\gamma-n}, \gamma \in(0,n)$, including questions concerning critical exponents and existence of solutions, have been investigated in \cite{Ghergu2, Ghergu1, Moroz1, Moroz2}.

Convolution-type nonlinearities in evolution equations are also connected with mean-field models in quantum mechanics. The term "Hartree-type" comes from Hartree's self-consistent-field approximation for atomic systems; see \cite{Hartee1, Hartee2, Hartee3}. A standard time-dependent Hartree equation is
$$
\begin{cases}
i\partial_t u-\Delta u=\left(|x|^{\gamma-n}* |u|^2\right)u, &(t, x) \in(0, \infty) \times \mathbb{R}^n, \quad n \in \mathbb{N}^*,\\
u(0,x)=u_0(x), &x \in \mathbb{R}^n, \quad n \in \mathbb{N}^*.
\end{cases}
$$
For $n=3$ and $\gamma=2$, the associated stationary problem is related to the Choquard equation, which also appears in Pekar's polaron model \cite{Pekar}.

Nonexistence results for parabolic equations with Hartree-type
nonlinearities involving convolution terms, namely
\begin{equation}\label{realnonlocal}
\begin{cases}
\partial_t u-\Delta u\geq \left(\mathcal{K}* |u|^{p_1}\right)|u|^{p_2}, &(t, x) \in(0, \infty) \times \mathbb{R}^n, \quad n \in \mathbb{N}^*, \\ u(0, x)=u_0(x), & x \in \mathbb{R}^n, \quad n \in \mathbb{N}^*,
\end{cases}
\end{equation}
were established by Filippucci and Ghergu in $2022$ \cite{Filippucci1, Filippucci2} for exponents $p_1, p_2>0$. Their
analysis in \cite{Filippucci1, Filippucci2} is carried out in a more general setting than \eqref{realnonlocal}, including quasilinear operators such as the $m$-Laplacian and the generalized mean curvature operator. In particular, \eqref{realnonlocal} admits no
nontrivial global weak solutions provided that $p_1+p_2>2, u_0\in L^1$,
$$
\int_{\mathbb{R}^n}u_0(x) dx>0 \quad \text{ and } \quad \limsup_{R\longrightarrow\infty}\mathcal{K}(R) R^{\frac{2n+2}{p_1+p_2}-n}>0.
$$
As a special case, they considered
$$
\mathcal{K}(r)=r^{-(n-\gamma)}, \quad \gamma \in (n-1,n),
$$
and showed that \eqref{realnonlocal} has no global weak solutions provided that
$$
u_0\in L^1, \int_{\mathbb{R}^n}u_0(x) dx>0 \quad \text{ and } \quad 2<p_1+p_2\leq 1+\frac{\gamma+2}{2n-\gamma}.
$$

\subsection{Main purpose of this paper}

We first study decay properties for the classical damped wave equation with initial data possessing additional negative regularity in homogeneous Besov spaces. More precisely, we consider
\begin{equation} \label{eq:1.1}
\begin{cases}
\partial_t^2 u-\Delta u+\partial_t u=0, & (t, x) \in(0, \infty) \times \mathbb{R}^n, \quad n \in \mathbb{N}^*, \\ u(0, x)=u_0(x), \quad \partial_t u(0, x)=u_1(x), & x \in\mathbb{R}^n, \quad n \in \mathbb{N}^*.
\end{cases}
\end{equation}
Our first aim is to derive decay estimates in $\dot{B}_{q,r}^{s}$ for initial data belonging to $\left(B_{q,r}^{s+\sigma_q}\cap\dot{B}_{m,\infty}^{-\beta}\right)
\times
\left(B_{q,r}^{s+\sigma_q-1}\cap\dot{B}_{m,\infty}^{-\beta}\right)$, where $1\leq m\leq q<\infty$ with $q>1$, $1\leq r\leq\infty$, $0\leq s<n/q$, $\beta\geq0$, $s+\beta+n(1/m-1/q)>0$, and $\sigma_q=(n-1)|1/2-1/q|$. Here the additional low-frequency information on the initial data is measured in $\dot{B}_{m,\infty}^{-\beta}$, whereas the inhomogeneous Besov spaces provide the regularity required in the high-frequency regime.

Decay estimates for damped wave equations with slowly decaying data and additional Besov regularity are available in several forms; see, for instance, \cite{Cardenas2022, Ikeda2026}. For our purposes, the relevant point is that the low-frequency part of the data is required only to belong to $\dot{B}_{m,\infty}^{-\beta}$; in particular, its third Besov index need not agree with the index $r$ of the target space $\dot{B}_{q,r}^{s}$. At high frequencies, we retain the known derivative loss $\sigma_q$. This formulation of the linear estimates is the one used in the nonlinear argument.

We then consider the semilinear Cauchy problem
\begin{equation} \label{eq:1.0}
\begin{cases}
\partial_t^2 u-\Delta u+\partial_t u=\mathcal{I}_\gamma\left(|u|^{p_1}\right)|u|^{p_2}, & (t, x) \in(0, \infty) \times \mathbb{R}^n, \quad n \in \mathbb{N}^*, \\ u(0, x)=u_0(x), \quad \partial_t u(0, x)=u_1(x), & x \in\mathbb{R}^n, \quad n \in \mathbb{N}^*,
\end{cases}
\end{equation}
where $p_1,p_2 \geq 1$, $\gamma\in(0,n)$, and the Riesz potential $\mathcal{I}_\gamma$ is defined in \eqref{eq:Riesz}.

The additional negative Besov regularity improves the decay available in the nonlinear estimates and thereby changes the balance between the Hartree-type interaction and the dissipative behavior of the linear flow. Combining the linear decay estimates with the strong Hardy-Littlewood-Sobolev inequality and the fractional Gagliardo-Nirenberg inequality, we prove global-in-time existence of small-data solutions under the assumptions of Theorem \ref{theorem:1.3}. On the other hand, the test-function method gives nonexistence of global weak solutions in the strictly subcritical range of Theorem \ref{theorem:1.5}. Thus, whenever the critical line is admissible for the existence result and the corresponding subcritical interval is nonempty, the two results identify the same Fujita-type threshold.

More precisely, let $\alpha\in(0,1]$ and suppose that $\left(u_0,u_1\right)\in
\left(H^\alpha\cap\dot{B}_{2,\infty}^{-\beta}\right)
\times
\left(L^2\cap\dot{B}_{2,\infty}^{-\beta}\right)$ with $\beta\in [0, n/2)$. Under the admissibility conditions on $p_1$ and $p_2$ stated in Theorem \ref{theorem:1.3}, the corresponding critical value is
$$
p_1+p_2
=
p_{\mathrm{Fuji}}\left(\frac{n+2\beta}{2+\gamma}\right)
:=
1+\frac{4+2\gamma}{n+2\beta},
\quad
\beta\in\left[0,\frac{n}{2}\right),
\quad
\gamma\in(0,n).
$$
If the critical line satisfies the remaining assumptions of Theorem \ref{theorem:1.3}, the equality case is contained in the global existence range. On the other hand, under the additional lower-bound assumption on the initial data, Theorem \ref{theorem:1.5} gives nonexistence of global weak solutions in the strictly subcritical interval $2<p_1+p_2<p_{\mathrm{Fuji}}((n+2\beta)/(2+\gamma))$.

We recall the homogeneous Besov spaces associated with a Littlewood-Paley decomposition. Let $\varphi$ be a smooth radial function supported in the annulus $\left\{\xi \in \mathbb{R}^n: 3/4 \leq|\xi| \leq 8/3\right\}$ such that
$$
\sum_{j \in \mathbb{Z}} \varphi\left(2^{-j}\xi\right)=1 \quad \text{ for all } \xi \in \mathbb{R}^n\backslash\{0\}.
$$
For $j \in \mathbb{Z}$, let $\dot{\Delta}_j$ be the homogeneous Littlewood-Paley projection defined by $\widehat{\dot{\Delta}_j f}(\xi)=\varphi\left(2^{-j}\xi\right)\widehat{f}(\xi)$; see \cite{Bahouri2011}. We work with the standard realization $\mathcal{S}'_h$ of homogeneous tempered distributions. For $s<n/p$ and $1 \leq p, q \leq \infty$, we set
$$
\dot{B}_{p, q}^s\left(\mathbb{R}^n\right):=\left\{f \in \mathcal{S}'_h\left(\mathbb{R}^n\right): \left(2^{j s}\left\|\dot{\Delta}_j f\right\|_{L^p}\right)_{j \in \mathbb{Z}} \in \ell^q(\mathbb{Z})\right\},
$$
with norm
$$
\left\|f\right\|_{\dot{B}_{p, q}^s}:=\left\|\left(2^{j s}\left\|\dot{\Delta}_j f\right\|_{L^p}\right)_{j \in \mathbb{Z}}\right\|_{\ell^q(\mathbb{Z})}.
$$
For negative regularity, the heat-semigroup characterization in \cite[Theorem 2.34]{Bahouri2011} gives, for $\delta>0$,
$$
f \in \dot{B}_{p, q}^{-\delta} \Longleftrightarrow t^{\frac{\delta}{2}}\left\|\mathrm{e}^{t\Delta}f\right\|_{L^p} \in L^q\left(\mathbb{R}^{+}, \frac{dt}{t}\right),
$$
with equivalent norms. Consequently, if $u_0 \in L^2 \cap \dot{B}_{2, \infty}^{-\delta}$, then
$$
\left\|\mathrm{e}^{t\Delta}u_0\right\|_{L^2} \lesssim (1+t)^{-\frac{\delta}{2}}\left(\left\|u_0\right\|_{L^2}+\left\|u_0\right\|_{\dot{B}_{2, \infty}^{-\delta}}\right).
$$

\noindent \textbf{Relations with other classes of initial data.}
The negative-order Besov assumption used in this paper encompasses several classes of initial data commonly considered in the literature. For $0\leq\beta<n/2$ and $m=(2n)/(n+2\beta)\in(1,2]$, Bernstein's inequality yields $L^m\hookrightarrow\dot{B}_{2,\infty}^{-\beta}$ with $\beta=n(1/m-1/2)$. Moreover, $\dot{H}^{-\beta}=\dot{B}_{2,2}^{-\beta}
\hookrightarrow\dot{B}_{2,\infty}^{-\beta}$, and, more generally, the standard Triebel-Lizorkin--Besov embedding gives $\dot{F}_{2,\rho}^{-\beta}\hookrightarrow\dot{B}_{2,\infty}^{-\beta}$ with $1\leq\rho\leq\infty$. Thus the additional negative Besov regularity covers the corresponding $L^m$, negative-order Sobolev and homogeneous Triebel-Lizorkin settings.

The Besov framework is also closely related to the decay character used in the analysis of dissipative equations. Following Brandolese \cite{Brandolese2016}, for $f\in L^2$ and $r>-n/2$, we define the lower and upper decay indicators by
$$
P_r(f)_{-}:=\liminf_{\rho\to0^+}\rho^{-2r-n}\int_{|\xi|\leq\rho}|\widehat{f}(\xi)|^2d\xi,
\quad
P_r(f)_{+}:=\limsup_{\rho\to0^+}\rho^{-2r-n}\int_{|\xi|\leq\rho}|\widehat{f}(\xi)|^2d\xi.
$$
If there exists $r^*(f)\in(-n/2,\infty)$ such that $0<P_{r^*(f)}(f)_{-}\leq P_{r^*(f)}(f)_{+}<\infty$, we call $r^*(f)$ the decay character of $f$. For $\beta>0$, we introduce the subclass
$$
\dot{\mathcal{A}}_{2,\infty}^{-\beta}
:=
\left\{
f\in\dot{B}_{2,\infty}^{-\beta}:
\begin{array}{l}
\text{there exist }c>0,\ M\in\mathbb{N}\text{ and }(j_k)_{k\geq1}\subset\mathbb{Z}\\
\text{such that }j_k\to-\infty,\ |j_k-j_{k+1}|\leq M,\ 
2^{-\beta j_k}\|\dot{\Delta}_{j_k}f\|_{L^2}\geq c
\end{array}
\right\}.
$$
With this notation, \cite[Proposition 4.3]{Brandolese2016} gives, for $f\in L^2$, the equivalence $r^*(f)=\beta-\frac{n}{2}\Longleftrightarrow f\in\dot{\mathcal{A}}_{2,\infty}^{-\beta}
\subset\dot{B}_{2,\infty}^{-\beta}$. Moreover, \cite[Theorem 3.1]{Brandolese2016} yields the sharp two-sided decay $\left\|\mathrm{e}^{t\Delta}f\right\|_{L^2}
\sim(1+t)^{-\beta/2}$ for the corresponding heat flow. Hence $\dot{B}_{2,\infty}^{-\beta}$ provides the upper low-frequency control used in our arguments, while the Brandolese subclass singles out data whose heat flow decays exactly at the rate $(1+t)^{-\beta/2}$.

\subsection{Main results} We now state the main results for the linear and semilinear damped wave equations.
\begin{theorem}[Linear decay estimates] \label{theorem:1.1}
Let $n \in \mathbb{N}^*$, $1 \leq m \leq q<\infty$ with $q>1$, $1 \leq r \leq\infty$, $0 \leq s<n/q$ and $\beta\geq0$. Put $\sigma_q:=(n-1)|1/2-1/q|$ and assume that $s+\beta+n(1/m-1/q)>0$. Let $\left(u_0,u_1\right)\in
\left(B_{q,r}^{s+\sigma_q}\cap\dot{B}_{m,\infty}^{-\beta}\right)
\times
\left(B_{q,r}^{s+\sigma_q-1}\cap\dot{B}_{m,\infty}^{-\beta}\right)$, where $B_{q,r}^{\rho}$ denotes the usual inhomogeneous Besov space. Then the Fourier multiplier solution to \eqref{eq:1.1} satisfies, for all $t\geq0$,
\begin{equation}
\label{ineq:besov-linear}
\left\|u(t,\cdot{})\right\|_{\dot{B}_{q,r}^{s}}
\lesssim (1+t)^{-\frac{s+\beta}{2}-\frac{n}{2}\left(\frac{1}{m}-\frac{1}{q}\right)}
\Big(\left\|u_0\right\|_{B_{q,r}^{s+\sigma_q}}+\left\|u_0\right\|_{\dot{B}_{m,\infty}^{-\beta}}+\left\|u_1\right\|_{B_{q,r}^{s+\sigma_q-1}}+\left\|u_1\right\|_{\dot{B}_{m,\infty}^{-\beta}}\Big).
\end{equation}
If, moreover, $\left(u_0,u_1\right)\in
\left(B_{q,r}^{s+1+\sigma_q}\cap\dot{B}_{m,\infty}^{-\beta}\right)
\times
\left(B_{q,r}^{s+\sigma_q}\cap\dot{B}_{m,\infty}^{-\beta}\right)$, then, for all $t\geq0$,
\begin{equation}
\label{ineq:besov-linear-time}
\left\|\partial_tu(t,\cdot{})\right\|_{\dot{B}_{q,r}^{s}}
\lesssim (1+t)^{-\frac{s+\beta}{2}-\frac{n}{2}\left(\frac{1}{m}-\frac{1}{q}\right)-1}
\Big(\left\|u_0\right\|_{B_{q,r}^{s+1+\sigma_q}}+\left\|u_0\right\|_{\dot{B}_{m,\infty}^{-\beta}}+\left\|u_1\right\|_{B_{q,r}^{s+\sigma_q}}+\left\|u_1\right\|_{\dot{B}_{m,\infty}^{-\beta}}\Big).
\end{equation}
\end{theorem}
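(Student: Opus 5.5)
The plan is to write the solution as a Fourier multiplier, $\widehat{u}(t,\xi)=\widehat{K_0}(t,\xi)\widehat{u_0}(\xi)+\widehat{K_1}(t,\xi)\widehat{u_1}(\xi)$, where
\[
\widehat{K_1}(t,\xi)=\frac{\mathrm{e}^{\lambda_+t}-\mathrm{e}^{\lambda_-t}}{\lambda_+-\lambda_-},\qquad \widehat{K_0}=\widehat{K_1}+\partial_t\widehat{K_1},\qquad \lambda_\pm=-\tfrac12\pm\sqrt{\tfrac14-|\xi|^2}.
\]
We then estimate each dyadic block $\dot{\Delta}_j u(t)$ separately in $L^q$ and sum in $\ell^r$ with weight $2^{js}$. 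Fix a threshold $j_0\in\mathbb{Z}$ such that the annuli with $j\le j_0$ lie in $\{|\xi|\le 1/4\}$ and those with $j\geq j_0+3$ lie in $\{|\xi|\ge 1\}$. The sum then splits into low frequencies $j\le j_0$, finitely many middle blocks, and high frequencies.

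\textbf{Low frequencies.} For $|\xi|\le1/4$ I use the standard decomposition $\widehat{K_1}=\mathrm{e}^{\lambda_+t}/(\lambda_+-\lambda_-)+(\text{exponentially decaying part})$, with $\lambda_+=-|\xi|^2+O(|\xi|^4)$, and similarly for $\widehat{K_0}$. Localized to the annulus $|\xi|\sim2^j$, Mikhlin-type bounds on the rescaled symbol give
\[
\|\dot{\Delta}_j K_i(t)*f\|_{L^m}\lesssim \mathrm{e}^{-c2^{2j}t}\|\dot{\Delta}_j f\|_{L^m}.
\]
Via the fattened projection $\tilde{\dot\Delta}_j$, the exponentially decaying part is bounded by $\mathrm{e}^{-ct}$. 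Bernstein's inequality then upgrades $L^m\to L^q$ at the cost of $2^{jn(1/m-1/q)}$, and $\|\dot{\Delta}_jf\|_{L^m}\le 2^{j\beta}\|f\|_{\dot{B}^{-\beta}_{m,\infty}}$. Hence
\[
2^{js}\|\dot{\Delta}_ju(t)\|_{L^q}\lesssim 2^{j\kappa}\mathrm{e}^{-c2^{2j}t}\big(\|u_0\|_{\dot{B}^{-\beta}_{m,\infty}}+\|u_1\|_{\dot{B}^{-\beta}_{m,\infty}}\big),\qquad \kappa:=s+\beta+n(1/m-1/q)>0.
\]
The key point is that $\sup_t t^{\kappa/2}\sum_{j}(2^{j\kappa}\mathrm{e}^{-c2^{2j}t})^r$ is finite; in fact the $\ell^r$ (even $\ell^1$) norm is $\lesssim \min(1,t^{-\kappa/2})$, since this is a geometric-type sum for $2^{2j}t\le1$ and decays super-exponentially for $2^{2j}t\ge1$. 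This is exactly where $\kappa>0$ and the $\ell^\infty$ third index of the data are reconciled, and it yields the rate $(1+t)^{-\kappa/2}$. For $t\le1$ we simply use boundedness and $\kappa>0$.

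\textbf{Middle and high frequencies.} On the finitely many middle blocks the multipliers are smooth and bounded by $\mathrm{e}^{-ct}$, and the data are controlled by $\|u_0\|_{B^{s+\sigma_q}_{q,r}}$, since homogeneous and inhomogeneous blocks coincide for $j\ge1$, with the finitely many remaining $j\le0$ blocks handled by the low-frequency inhomogeneous block. For high frequencies, $\lambda_\pm=-\tfrac12\pm i\sqrt{|\xi|^2-1/4}$, so $\widehat{K_1}$ is $\mathrm{e}^{-t/2}$ times $\sin(t\langle\xi\rangle_*)/\langle\xi\rangle_*$, and $\widehat{K_0}$ contains $\mathrm{e}^{-t/2}\cos(t\langle\xi\rangle_*)$. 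Here I invoke the classical $L^q$ estimate for dyadically localized wave propagators (Peral/Miyachi): $\|\dot\Delta_j \mathrm{e}^{it|D|}f\|_{L^q}\lesssim (1+t)^{C}2^{j\sigma_q}\|\dot\Delta_jf\|_{L^q}$, where the phase perturbation $\sqrt{|\xi|^2-1/4}-|\xi|=O(|\xi|^{-1})$ is a harmless symbol of order $-1$. The polynomial growth is absorbed by $\mathrm{e}^{-t/2}$, and the extra $\langle\xi\rangle^{-1}$ in $\widehat{K_1}$ explains the index $s+\sigma_q-1$ for $u_1$.

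\textbf{Time derivative, and the main obstacle.} The estimate \eqref{ineq:besov-linear-time} follows identically from $\partial_t\widehat{K_i}$. At low frequencies this brings an extra factor $\lambda_+\sim|\xi|^2$, which gives $2^{2j}$ and hence one extra power $t^{-1}$; at high frequencies it costs one derivative. I expect the main obstacle to be the high-frequency Peral-type bound with the modified phase $\sqrt{|\xi|^2-1/4}$, together with uniformity in $t$. I would handle it by writing $\mathrm{e}^{it\sqrt{|\xi|^2-1/4}}=\mathrm{e}^{it|\xi|}\,\mathrm{e}^{it a(\xi)}$ with $a\in S^{-1}$, treating the second factor by Mikhlin with constants polynomial in $t$, and citing the sharp $L^q$ bound $2^{j\sigma_q}(1+t)^{\sigma_q}$ for the first factor.
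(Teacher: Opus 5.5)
Your proposal is correct and follows essentially the same route as the paper: the same low/middle/high dyadic split, rescaled heat-type multiplier bounds $\mathrm{e}^{-c2^{2j}t}$ combined with Bernstein's inequality and the dyadic summation that uses $\kappa=s+\beta+n(1/m-1/q)>0$, and exponential decay elsewhere. The only real difference is at high frequencies, where the paper cites the known Besov-form estimate of Ikeda et al., whereas you derive it from the Peral--Miyachi dyadic wave bound by factoring $\mathrm{e}^{it\sqrt{|\xi|^2-1/4}}=\mathrm{e}^{it|\xi|}\mathrm{e}^{ita(\xi)}$ with $a\in S^{-1}$.

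One slip needs fixing. At low frequencies $\partial_t\widehat{K_1}$ also contains the fast piece $-\lambda_-\mathrm{e}^{\lambda_-t}/(\lambda_+-\lambda_-)$, which carries no factor $|\xi|^2$ (note that $\partial_tu(0)=u_1$). The correct block bound is therefore $2^{2j}\mathrm{e}^{-c2^{2j}t}+\mathrm{e}^{-ct}$, exactly as in the paper, and the second term is controlled not by an extra $t^{-1}$ but by the $\ell^r$-summability of $(2^{j\kappa})_{j\leq j_0}$, which again uses $\kappa>0$.
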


\begin{remark}
The low-frequency assumption in Theorem \ref{theorem:1.1} also covers the usual Lebesgue setting. Indeed, let $1\leq a\leq m$ and choose $\beta=n(1/a-1/m)$. Then Bernstein's inequality yields $L^a\hookrightarrow\dot{B}_{m,\infty}^{-\beta}$. Hence the low-frequency hypotheses on $u_0$ and $u_1$ in Theorem \ref{theorem:1.1} may be replaced by $L^a$ assumptions, while the high-frequency Besov regularity is left unchanged. In this case the decay factor in \eqref{ineq:besov-linear} becomes $(1+t)^{-s/2-(n/2)(1/a-1/q)}$. In particular, when $m=2$, the embeddings
$$
\dot{H}^{-\beta}=\dot{B}_{2,2}^{-\beta}\hookrightarrow\dot{B}_{2,\infty}^{-\beta},
\quad
\dot{F}_{2,\rho}^{-\beta}\hookrightarrow\dot{B}_{2,\infty}^{-\beta},
\quad 1\leq\rho\leq\infty,
$$
show that negative-order Sobolev and homogeneous Triebel-Lizorkin data are also covered. If, moreover, $\beta>0$, then the Brandolese class $\dot{\mathcal{A}}_{2,\infty}^{-\beta}$, equivalently the $L^2$ data with decay character $r^*=\beta-n/2$, is also contained in the same low-frequency Besov space. Consequently, Theorem \ref{theorem:1.1} covers the corresponding Lebesgue, negative-order Sobolev, homogeneous Triebel-Lizorkin and decay-character settings.
\end{remark}

\begin{remark} \label{remark:hilbert-linear}
For later use, we state the corresponding estimates in the Hilbert-space setting. Let $\alpha\in[0,1]$, $\beta\geq0$ and $\left(u_0,u_1\right)\in
\left(H^\alpha\cap\dot{B}_{2,\infty}^{-\beta}\right)
\times
\left(L^2\cap\dot{B}_{2,\infty}^{-\beta}\right)$. Then $u\in\mathcal{C}\left([0,\infty),H^\alpha\right)$ and
\begin{equation} \label{ineq:1.3}
\left\|u(t,\cdot{})\right\|_{\dot{H}^\alpha}
\lesssim(1+t)^{-\frac{\beta+\alpha}{2}}
\left(\left\|u_0\right\|_{H^\alpha}+\left\|u_0\right\|_{\dot{B}_{2,\infty}^{-\beta}}+\left\|u_1\right\|_{L^2}+\left\|u_1\right\|_{\dot{B}_{2,\infty}^{-\beta}}\right).
\end{equation}
If, in addition, $u_0\in H^1$, then $u\in\mathcal{C}\left([0,\infty),H^1\right)\cap\mathcal{C}^1\left([0,\infty),L^2\right)$ and
\begin{equation} \label{ineq:1.4}
\left\|\partial_tu(t,\cdot{})\right\|_{L^2}
\lesssim(1+t)^{-\frac{\beta}{2}-1}
\left(\left\|u_0\right\|_{H^1}+\left\|u_0\right\|_{\dot{B}_{2,\infty}^{-\beta}}+\left\|u_1\right\|_{L^2}+\left\|u_1\right\|_{\dot{B}_{2,\infty}^{-\beta}}\right).
\end{equation}
These bounds are proved directly below. In particular, they do not require the restriction $\alpha<n/2$ that would result from taking $q=2$ and $s=\alpha$ in the homogeneous Besov estimate.
\end{remark}

\begin{remark}
The high-frequency derivative loss in Theorem \ref{theorem:1.1} is already known. Sharp $L^p$-$L^q$ estimates were established in \cite{Ikeda2019}, and Proposition 2.2 of \cite{Ikeda2026} gives the corresponding homogeneous Besov formulation. By Remark 2.3(iii) of \cite{Ikeda2026}, the third Besov index $2$ may be replaced by any $r\in[1,\infty]$, provided that the same index is used on both sides. Identifying their output exponent $p$ with our $q$, their input exponent $q$ with our $m$, and $s_1=s$, $s_2=-\beta$, one recovers the decay exponent in \eqref{ineq:besov-linear} together with the derivative loss $\sigma_q$. The point relevant to the present argument lies in the low-frequency part. Proposition 2.2 with third index $r$ requires the low-frequency datum in $\dot{B}_{m,r}^{-\beta}$, whereas Theorem \ref{theorem:1.1} uses only the blockwise $\ell^\infty$ control provided by $\dot{B}_{m,\infty}^{-\beta}$. The factor $\mathrm{e}^{-ct2^{2j}}$ supplies the required $\ell^r$ summability under the condition $s+\beta+n(1/m-1/q)>0$. This refinement is precisely the part of the linear estimate needed in the nonlinear analysis below.
\end{remark}
We next state the small-data global existence result. Here and below, a mild solution to \eqref{eq:1.0} is understood in the Duhamel sense, through the integral representation introduced at the beginning of Section \ref{section_5}.
\begin{theorem}[Global existence] \label{theorem:1.3}
Let $n \in \mathbb{N}^*$, $\alpha \in (0, 1]$, $0 \leq \beta < n/2$, $0<\gamma<n$, and $\left(u_0, u_1\right) \in \mathcal{D}:=\left(H^\alpha \cap \dot{B}_{2, \infty}^{-\beta}\right) \times \left(L^2 \cap \dot{B}_{2, \infty}^{-\beta}\right)$. Let $p_1,p_2\geq1$ and set $p:=p_1+p_2$. Assume that
\begin{align}
&p_1>\frac{2\gamma}{n}, \quad p_2>\frac{2(\beta+\gamma)}{n}-1, \quad p_1+p_2>1+\frac{2(\beta+\gamma)}{n},\notag\\
\label{eq:time-threshold}
&p_1+p_2\geq p_{\mathrm{Fuji}}\left(\frac{n+2\beta}{2+\gamma}\right):=1+\frac{4+2\gamma}{n+2\beta}.
\end{align}
If $n>2\alpha$, assume in addition that
$$
p_1<\frac{2n}{n-2\alpha}, \quad p_2<\frac{n}{n-2\alpha}, \quad p_1+p_2\leq\frac{n+2\gamma}{n-2\alpha}.
$$
Then there exists a constant $\varepsilon>0$ such that, if $\left\|\left(u_0, u_1\right)\right\|_{\mathcal{D}}:=\left\|u_0\right\|_{H^\alpha}+\left\|u_0\right\|_{\dot{B}_{2, \infty}^{-\beta}}+\left\|u_1\right\|_{L^2}+\left\|u_1\right\|_{\dot{B}_{2, \infty}^{-\beta}} \leq \varepsilon$, problem \eqref{eq:1.0} has a unique global mild solution $u \in \mathcal{C}\left([0, \infty), H^\alpha\right)$. Moreover, for every $t\geq0$,
\begin{align}
\label{ineq:theorem:1.3.1} \left\|u(t, \cdot{})\right\|_{L^2} \lesssim& \left\|\left(u_0, u_1\right)\right\|_{\mathcal{D}}(1+t)^{-\frac{\beta}{2}},\\
\label{ineq:theorem:1.3.2} \left\|u(t, \cdot{})\right\|_{\dot{H}^\alpha} \lesssim& \left\|\left(u_0, u_1\right)\right\|_{\mathcal{D}}(1+t)^{-\frac{\beta+\alpha}{2}}.
\end{align}
\end{theorem}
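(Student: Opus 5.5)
We argue by a Banach fixed-point argument for the Duhamel operator
$$
\Phi[u](t):=u^{\mathrm{lin}}(t)+\int_0^t E_1(t-\tau)\,F(u(\tau))\,d\tau,
\qquad
F(u):=\mathcal{I}_\gamma\left(|u|^{p_1}\right)|u|^{p_2},
$$
where $u^{\mathrm{lin}}$ solves \eqref{eq:1.1} and $E_1(t)$ is the propagator sending data $(0,g)$ to the solution at time $t$. The solution space is
$$
X:=\mathcal{C}\left([0,\infty),H^\alpha\right),
\qquad
\|u\|_X:=\sup_{t\geq0}\Big((1+t)^{\frac{\beta}{2}}\|u(t)\|_{L^2}+(1+t)^{\frac{\beta+\alpha}{2}}\|u(t)\|_{\dot{H}^\alpha}\Big).
$$
By Remark \ref{remark:hilbert-linear}, $\|u^{\mathrm{lin}}\|_X\lesssim\|(u_0,u_1)\|_{\mathcal{D}}$. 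The goal is the two estimates
$$
\|\Phi[u]\|_X\lesssim\|(u_0,u_1)\|_{\mathcal{D}}+\|u\|_X^{p},
\qquad
\|\Phi[u]-\Phi[v]\|_X\lesssim\|u-v\|_X\left(\|u\|_X^{p-1}+\|v\|_X^{p-1}\right).
$$
These give a contraction on a small ball, which yields existence, uniqueness and the bounds \eqref{ineq:theorem:1.3.1}--\eqref{ineq:theorem:1.3.2}.

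\textbf{Estimating the Duhamel term.} We apply \eqref{ineq:1.3} with $u_0=0$ and $u_1=F(u(\tau))$. We use it in two ways:
\begin{itemize}
\item on $[0,t/2]$ with full low-frequency gain, with the datum measured in $L^2\cap\dot{B}_{2,\infty}^{-\beta}$;
\item on $[t/2,t]$ with $\beta=0$, where only the $L^2$ norm of $F$ enters and the decay of $F$ is used.
\end{itemize}
This requires two nonlinear bounds.

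\emph{The $L^2$ norm of $F$.} By H\"older, $\|F\|_{L^2}\leq\|\mathcal{I}_\gamma(|u|^{p_1})\|_{L^{a}}\|u\|_{L^{2p_2 b'}}^{p_2}$ with $1/a+1/b=1/2$, where $b'$ is chosen so that $2p_2b'$ is the required Lebesgue exponent. Hardy--Littlewood--Sobolev then gives $\|\mathcal{I}_\gamma(|u|^{p_1})\|_{L^a}\lesssim\|u\|_{L^{p_1 c}}^{p_1}$ with $1/c=1/a+\gamma/n$. Each Lebesgue norm is bounded by fractional Gagliardo--Nirenberg:
$$
\|u\|_{L^\rho}\lesssim\|u\|_{L^2}^{1-\theta}\|u\|_{\dot{H}^\alpha}^{\theta},
\qquad
\theta=\frac{n}{\alpha}\left(\frac12-\frac1\rho\right)\in[0,1].
$$
The upper restrictions $p_1<2n/(n-2\alpha)$, $p_2<n/(n-2\alpha)$ and $p\leq(n+2\gamma)/(n-2\alpha)$ when $n>2\alpha$ are exactly what guarantee $\theta\leq1$ for admissible choices of $a,c$. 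This yields
$$
\|F(u(\tau))\|_{L^2}\lesssim(1+\tau)^{-\frac{\beta p}{2}-\frac n4(p-1)+\frac{\gamma}{2}}\,\|u\|_X^p,
$$
since the total $\theta$-weighted decay equals the scaling count $\frac{n}{2}\left(p-\tfrac12-\tfrac{\gamma}{n}\cdot\ldots\right)$.

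\emph{The $\dot{B}_{2,\infty}^{-\beta}$ norm of $F$.} We use the embedding $L^{m}\hookrightarrow\dot{B}_{2,\infty}^{-\beta}$ with $m=2n/(n+2\beta)$, taken from the paper's relations paragraph. We then repeat the H\"older--HLS--Gagliardo--Nirenberg chain with target $L^m$. The lower conditions $p_1>2\gamma/n$, $p_2>2(\beta+\gamma)/n-1$ and $p>1+2(\beta+\gamma)/n$ are precisely what make all intermediate exponents lie in $[2,\infty)$, so that $\theta\geq0$, and make the HLS exponents $1<c<a<\infty$ admissible.

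\textbf{Time integrals.} We then integrate
$$
\int_0^{t}(1+t-\tau)^{-\frac{\beta+k}{2}}(1+\tau)^{-\kappa}\,d\tau,
\qquad k\in\{0,\alpha\}.
$$
The threshold \eqref{eq:time-threshold} is equivalent to $\kappa\geq1$, where $\kappa$ is the decay exponent of $\|F\|_{L^2\cap\dot B^{-\beta}_{2,\infty}}$. Scaling heuristics give $\kappa=\frac{(n+2\beta)(p-1)}{4}-\frac{\gamma}{2}$. For $\kappa>1$ the integral over $[0,t/2]$ gives $(1+t)^{-(\beta+k)/2}$ directly. On $[t/2,t]$ we use the $L^2$-based estimate, which decays faster, of order $(1+t)^{-\kappa-\beta/2+\ldots}$. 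Since $\beta<n/2$, the extra decay from the $L^2$ bound compensates the loss of the low-frequency factor.

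\textbf{Main obstacle: the critical case $\kappa=1$.} Here $\int_0^{t/2}(1+\tau)^{-1}\,d\tau$ produces a logarithm, so the argument needs care. First I would check that the $\dot{B}_{2,\infty}^{-\beta}$ bound of $F$ actually decays slightly better than the scaling count. The embedding $L^m\hookrightarrow\dot{B}^{-\beta}_{2,\infty}$ is sharp, though, so instead I would exploit the $\ell^\infty$ structure. Rather than bounding $\|F(\tau)\|_{\dot{B}^{-\beta}_{2,\infty}}$ pointwise in $\tau$, I would estimate the low-frequency part of the Duhamel integral blockwise, as in the proof of Theorem \ref{theorem:1.1}:
$$
\left\|\dot\Delta_j\int_0^{t/2}E_1(t-\tau)F\,d\tau\right\|_{L^2}\lesssim\int_0^{t/2}e^{-c(t-\tau)2^{2j}}\,2^{j\beta}\|F(\tau)\|_{L^m}\,d\tau.
$$
Then $\sup_j$ of $2^{-j\beta}\cdot$ this quantity is controlled using the heat factor, which trades the logarithm for the $\ell^\infty$ summation. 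If that fails, I would fall back on bounding $\dot\Delta_j F$ in $L^m$ only for $2^{2j}\tau\lesssim1$.

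The contraction estimate follows the same chain, using $\big||u|^{p_i}-|v|^{p_i}\big|\lesssim|u-v|\left(|u|^{p_i-1}+|v|^{p_i-1}\right)$, which is valid since $p_i\geq1$. Continuity in time with values in $H^\alpha$ follows from the strong continuity of the linear propagators.
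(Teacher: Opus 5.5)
Your supercritical argument is correct and is essentially the paper's. Measuring $F$ in $L^{m_\beta}\cap L^2$ and feeding it into \eqref{ineq:1.3} through $L^{m_\beta}\hookrightarrow\dot B^{-\beta}_{2,\infty}$ is equivalent to the paper's direct $L^q\cap L^2\to L^2,\dot H^\alpha$ bounds for $K_1$. On $[t/2,t]$, what makes the $L^2$ bound suffice is the identity $\mu_2=\kappa+\beta/2$, not $\beta<n/2$.

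\textbf{The gap: the critical case $\kappa=1$.} The theorem explicitly includes this case, and your blockwise remedy cannot remove the logarithm. For $\tau\in[0,t/2]$ the heat factor satisfies $\mathrm{e}^{-c(t-\tau)2^{2j}}\leq\mathrm{e}^{-ct2^{2j}/2}$ uniformly in $\tau$. So your right-hand side is at most $2^{j\beta}\mathrm{e}^{-ct2^{2j}/2}\int_0^{t/2}\|F(\tau)\|_{L^{m_\beta}}d\tau$. With $\|F(\tau)\|_{L^{m_\beta}}\lesssim(1+\tau)^{-1}$ this is $2^{j\beta}\mathrm{e}^{-ct2^{2j}/2}\log(2+t)$, and no $\tau$-dependence is left to absorb the logarithm.

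Summing in $\ell^2$ over $j$ still leaves the factor $\log(2+t)$. The $\ell^2$ sum is what the $L^2$ and $\dot H^\alpha$ weights in $X$ require. By contrast, a bound on $\sup_j2^{-j\beta}\|\dot\Delta_j(\cdot)\|_{L^2}$ controls a $\dot B^{-\beta}_{2,\infty}$ norm that $X$ does not contain.

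The fallback does no better. Switching between the $L^{m_\beta}$ and $L^2$ bounds for $\dot\Delta_jF$ according to $2^{2j}\tau$ only uses Lebesgue exponents in $[m_\beta,2]$. That gives $\|\dot\Delta_jF(\tau)\|_{L^2}\lesssim\min\{2^{j\beta}(1+\tau)^{-1},(1+\tau)^{-1-\beta/2}\}$. Then $\int_0^{2^{-2j}}2^{j\beta}(1+\tau)^{-1}d\tau\sim|j|2^{j\beta}$ reproduces the logarithm at the dominant scale $2^{-2j}\sim t$. You correctly noted that the pointwise bound cannot be improved through $L^{m_\beta}$. The way out is to change the exponent, not the summation.

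\textbf{The missing idea: go strictly below $m_\beta$.} The three lower conditions on $p_1$, $p_2$ and $p$ are strict. So you can choose $\beta_1\in(\beta,n/2)$ close to $\beta$ and $q_1:=2n/(n+2\beta_1)\in(1,m_\beta)$ such that the HLS--H\"older--Gagliardo--Nirenberg chain still closes with target $L^{q_1}$. This gives $\|F(u(s))\|_{L^{q_1}\cap L^2}\lesssim(1+s)^{-\mu_{q_1}}\|u\|_X^p$ with $\mu_{q_1}=1-(\beta_1-\beta)/2<1$.

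Now apply \eqref{ineq:1.3} with $\beta_1$ in place of $\beta$, using $L^{q_1}\hookrightarrow\dot B^{-\beta_1}_{2,\infty}$. On $[0,t/2]$ the propagator then decays like $(1+t)^{-\beta_1/2}$, respectively $(1+t)^{-(\beta_1+\alpha)/2}$. Meanwhile $\int_0^{t/2}(1+s)^{-1+(\beta_1-\beta)/2}ds\lesssim(1+t)^{(\beta_1-\beta)/2}$. The product is exactly $(1+t)^{-\beta/2}$, respectively $(1+t)^{-(\beta+\alpha)/2}$, with no logarithm. This is the step the paper uses, and it is where $m_\beta>1$, i.e.\ $\beta<n/2$, is essential.

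\textbf{A smaller point: uniqueness.} A contraction on a small ball gives uniqueness only inside that ball. The theorem asserts uniqueness in all of $\mathcal{C}([0,\infty),H^\alpha)$. For that you need the local Lipschitz bound $\|F(w)-F(z)\|_{L^2}\leq C_M\|w-z\|_{H^\alpha}$ on bounded subsets of $H^\alpha$, plus a Gronwall argument on each $[0,T]$, as the paper does.
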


\begin{remark}
Theorem \ref{theorem:1.3} also applies to several stronger classes of initial data. Let $m_\beta:=(2n)/(n+2\beta)\in(1,2]$. Since $L^{m_\beta}\hookrightarrow\dot{B}_{2,\infty}^{-\beta}$, the conclusion of Theorem \ref{theorem:1.3} remains valid for $\left(u_0,u_1\right)\in
\left(H^\alpha\cap L^{m_\beta}\right)
\times
\left(L^2\cap L^{m_\beta}\right)$, provided that the stronger data norm is sufficiently small and the remaining assumptions on $p_1$ and $p_2$ are satisfied. With $\beta=n(1/m_\beta-1/2)$, the critical condition \eqref{eq:time-threshold} takes the form
$$
p_1+p_2\geq1+\frac{m_\beta(2+\gamma)}{n}.
$$
Likewise, the theorem applies to $\left(H^\alpha\cap\dot{H}^{-\beta}\right)
\times
\left(L^2\cap\dot{H}^{-\beta}\right)$, and, more generally, to $\left(H^\alpha\cap\dot{F}_{2,\rho}^{-\beta}\right)
\times
\left(L^2\cap\dot{F}_{2,\rho}^{-\beta}\right)$ with $1\leq\rho\leq\infty$, since $\dot{H}^{-\beta}$ and $\dot{F}_{2,\rho}^{-\beta}$ are continuously embedded into $\dot{B}_{2,\infty}^{-\beta}$. Finally, if $\beta>0$ and both $u_0$ and $u_1$ admit the decay character $r^*=\beta-n/2$ in the sense of Brandolese \cite{Brandolese2016}, then they belong to $\dot{\mathcal{A}}_{2,\infty}^{-\beta}\subset\dot{B}_{2,\infty}^{-\beta}$, and Theorem \ref{theorem:1.3} applies under the same regularity, smallness and exponent assumptions.
\end{remark}

Our final main result concerns the nonexistence of global weak solutions in the subcritical range. The interval in \eqref{p_critical} is nonempty exactly when $n+2\beta<4+2\gamma$, because $2<1+(4+2\gamma)/(n+2\beta)$ is equivalent to this condition.

\begin{theorem}[Nonexistence] \label{theorem:1.5}
Let $n \in \mathbb{N}^*$, $\gamma \in (0, n)$, $\beta \in [0, n/2)$ and $p_1,p_2\geq1$. Assume that $\left(u_0, u_1\right) \in \left(L_\mathrm{loc}^1 \cap \dot{B}_{2, \infty}^{-\beta}\right) \times \left(L_\mathrm{loc}^1 \cap \dot{B}_{2, \infty}^{-\beta}\right)$ and
\begin{equation}
\label{condition_u0u1}
u_0(x)+u_1(x) \gtrsim\langle x\rangle^{-\frac{n}{2}-\beta}\left(\log \left(\mathrm{e}+|x|\right)\right)^{-1} \quad \text{ for almost every } x \in \mathbb{R}^n,
\end{equation}
where $\langle x\rangle:=\left(1+|x|^2\right)^{\frac{1}{2}}$. Assume in addition that
\begin{equation} \label{p_critical}
2<p_1+p_2< p_{\mathrm{Fuji}}\left(\frac{n+2\beta}{2+\gamma}\right).
\end{equation}
Then \eqref{eq:1.0} admits no global weak solution in the sense of Definition \ref{def:weak}.
\end{theorem}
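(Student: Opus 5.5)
The plan is the test-function method, with a lower bound for the Riesz potential that reduces the Hartree term to a product of two local integrals. Suppose, for contradiction, that $u$ is a global weak solution in the sense of Definition \ref{def:weak}. I will assume this means: $|u|^{p_1},|u|^{p_2},\mathcal{I}_\gamma(|u|^{p_1})|u|^{p_2}\in L^1_{\mathrm{loc}}$, and for every nonnegative $\phi\in C_c^\infty([0,\infty)\times\mathbb{R}^n)$ with $\partial_t\phi(0,\cdot)=0$,
$$
\int_0^\infty\!\!\int \mathcal{I}_\gamma(|u|^{p_1})|u|^{p_2}\phi\,dx\,dt+\int (u_0+u_1)\phi(0,x)\,dx=\int_0^\infty\!\!\int u\,(\partial_t^2\phi-\Delta\phi-\partial_t\phi)\,dx\,dt.
$$

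\textbf{Test function and the two easy terms.} Fix $\eta\in C_c^\infty([0,\infty))$ with $\eta=1$ on $[0,1/2]$, $\eta=0$ on $[1,\infty)$, and $\psi\in C_c^\infty(\mathbb{R}^n)$ radial with $\psi=1$ on $B_{1/2}$ and $\mathrm{supp}\,\psi\subset B_1$. For large $R$ and a large integer $k$, set
$$
\phi_R(t,x)=\eta(t/R^2)^k\psi(x/R)^k .
$$
Taking $k$ large (depending on $p$) gives the pointwise bound $|\partial_t^2\phi_R-\Delta\phi_R-\partial_t\phi_R|\lesssim R^{-2}\phi_R^{2/p}$, where $p=p_1+p_2$. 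Writing $J_R:=\iint|u|\phi_R^{2/p}$, the right-hand side is therefore at most $CR^{-2}J_R$. For the data term I use \eqref{condition_u0u1} and $\beta<n/2$:
$$
\int(u_0+u_1)\psi(x/R)^k\,dx\gtrsim\int_{|x|<R/2}\langle x\rangle^{-\frac n2-\beta}\log(\mathrm{e}+|x|)^{-1}\,dx\gtrsim R^{\frac n2-\beta}(\log R)^{-1}.
$$

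\textbf{Lower bound for the nonlinear term.} This is the key step. For $|x|,|y|<R$ we have $|x-y|^{-(n-\gamma)}\ge (2R)^{-(n-\gamma)}$, so on $\mathrm{supp}\,\phi_R$
$$
\mathcal{I}_\gamma(|u|^{p_1})(t,x)\gtrsim R^{\gamma-n}\int_{B_R}|u|^{p_1}\,dy\ge R^{\gamma-n}A(t),\qquad A(t):=\int|u|^{p_1}\phi_R\,dy,
$$
using $0\le\phi_R\le1$. Hence $N_R:=\iint\mathcal{I}_\gamma(|u|^{p_1})|u|^{p_2}\phi_R\gtrsim R^{\gamma-n}\int A(t)B(t)\,dt$ with $B(t):=\int|u|^{p_2}\phi_R$. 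To relate $AB$ to $J_R$, I factor
$$
|u|\phi_R^{2/p}=(|u|^{p_1}\phi_R)^{1/p}(|u|^{p_2}\phi_R)^{1/p}\cdot 1
$$
and apply three-factor Hölder with exponents $p,p,(1-2/p)^{-1}$. This is exactly where $p>2$ is needed. It gives
$$
\int|u|\phi_R^{2/p}\,dx\le A^{1/p}B^{1/p}R^{n(1-2/p)}.
$$
Then Hölder in time over the interval $[0,R^2]$ yields $\int AB\,dt\gtrsim R^{-n(p-2)-2(p-1)}J_R^p$. Altogether
$$
N_R\gtrsim R^{-\kappa}J_R^p,\qquad \kappa:=(n+2)(p-1)-\gamma .
$$

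\textbf{Conclusion.} Combining the three bounds and absorbing via Young's inequality,
$$
R^{\frac n2-\beta}(\log R)^{-1}+cR^{-\kappa}J_R^p\le CR^{-2}J_R\le \tfrac c2R^{-\kappa}J_R^p+C'R^{\frac{\kappa-2p}{p-1}} .
$$
Since $\kappa-2p=n(p-1)-2-\gamma$, this gives
$$
R^{\frac n2-\beta}(\log R)^{-1}\lesssim R^{\,n-\frac{2+\gamma}{p-1}} .
$$
Condition \eqref{p_critical} is equivalent to $\frac n2-\beta>n-\frac{2+\gamma}{p-1}$, so letting $R\to\infty$ gives a contradiction; the logarithm is harmless because the inequality between the exponents is strict. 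The main obstacle is the nonlinear lower bound. The Hartree term is not a power of $u$, so the usual Hölder step must be replaced by the Riesz-kernel lower bound together with the three-factor Hölder splitting above. I must also check that the cutoff exponent $k$ is large enough both for the bound $|\partial_t^2\phi_R-\Delta\phi_R-\partial_t\phi_R|\lesssim R^{-2}\phi_R^{2/p}$ and for the test function to be admissible in Definition \ref{def:weak}.
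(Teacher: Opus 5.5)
Your proof is correct and is essentially the paper's argument. It uses the same cutoff $\eta(t/R^2)^k\psi(x/R)^k$; the paper takes $k\geq 2p/(p-2)$, which is exactly what your pointwise bound on $\partial_t^2\phi_R-\Delta\phi_R-\partial_t\phi_R$ needs. It also uses the same kernel bound $|x-y|^{\gamma-n}\geq(2R)^{\gamma-n}$ to reduce the Hartree term to $\int A(t)B(t)\,dt$, the same data lower bound $R^{\frac n2-\beta}(\log R)^{-1}$, and the same Young absorption, giving the identical exponent condition. The only difference is bookkeeping: the paper runs the H\"older step through $\mathcal{M}_R=\iint|u|^{(p_1+p_2)/2}\phi_R$ and Cauchy--Schwarz ($\mathcal{H}_R(t)^2\leq A_1(t)A_2(t)$ in space, then in time), instead of your $J_R=\iint|u|\phi_R^{2/p}$ with three-factor H\"older.

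One small correction: Definition~\ref{def:weak} requires $u\in L^{(p_1+p_2)/2}_{\mathrm{loc}}$, not $|u|^{p_1},|u|^{p_2}\in L^1_{\mathrm{loc}}$. This does not matter, because your argument only uses $u\in L^1_{\mathrm{loc}}$ (so $J_R<\infty$ can be absorbed) and local integrability of the Hartree term, with the kernel and H\"older bounds read in $[0,\infty]$.
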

\begin{remark}
The negative Besov assumption in Theorem \ref{theorem:1.5} may likewise be replaced by stronger hypotheses, while the lower bound \eqref{condition_u0u1} remains unchanged. We retain this condition in the statement only to place the nonexistence result in the same low-frequency framework as the global existence theorem; the test-function argument itself uses only the local integrability of the initial data together with \eqref{condition_u0u1}. Let $m_\beta:=(2n)/(n+2\beta)\in(1,2]$. Since $L^{m_\beta}\hookrightarrow\dot{B}_{2,\infty}^{-\beta}$ and $L^{m_\beta}\hookrightarrow L_\mathrm{loc}^1$, Theorem \ref{theorem:1.5} applies in particular when $u_0,u_1\in L^{m_\beta}$ satisfy \eqref{condition_u0u1}. In this formulation, the upper endpoint of the nonexistence range becomes
$$
p_{\mathrm{Fuji}}\left(\frac{n+2\beta}{2+\gamma}\right)
=
1+\frac{m_\beta(2+\gamma)}{n}.
$$
Likewise, one may replace the negative Besov assumption by $u_0,u_1\in L_\mathrm{loc}^1\cap\dot{H}^{-\beta}$ or, more generally, by $u_0,u_1\in L_\mathrm{loc}^1\cap\dot{F}_{2,\rho}^{-\beta}$ with $1\leq\rho\leq\infty$. If $\beta>0$, the theorem also covers the $L^2\cap L_\mathrm{loc}^1$ subclass for which both data have Brandolese decay character $r^*=\beta-n/2$, since such data belong to $\dot{\mathcal{A}}_{2,\infty}^{-\beta}\subset\dot{B}_{2,\infty}^{-\beta}$. In each of these settings, the pointwise positivity condition \eqref{condition_u0u1} remains essential.
\end{remark}
\begin{remark}
If $p_1+p_2>2$, the mild solution obtained in Theorem \ref{theorem:1.3} is also a weak solution to \eqref{eq:1.0} in the sense of Definition \ref{def:weak}. Indeed, the proof of Theorem \ref{theorem:1.3} yields $\mathcal{I}_\gamma(|u|^{p_1})|u|^{p_2}\in \mathcal{C}([0,T],L^2)$ for every $T>0$. Put $\widetilde p=(p_1+p_2)/2$. If $\widetilde p\leq2$, then $u\in L_\mathrm{loc}^{\widetilde p}$ follows from local $L^2$ integrability. If $\widetilde p>2$, we apply the fractional Sobolev embedding: when $n\leq2\alpha$, every finite exponent is admissible, while for $n>2\alpha$ the upper condition in Theorem \ref{theorem:1.3} together with $\gamma<n$ gives $\widetilde p\leq (n+2\gamma)/(2n-4\alpha)<(2n)/(n-2\alpha)$. Hence $u\in L_\mathrm{loc}^{\widetilde p}([0,T]\times\mathbb{R}^n)$. To verify \eqref{eq:8.1}, we use a standard density argument. Choose $u_{0,j},u_{1,j}\in\mathcal{C}_0^\infty$ with $u_{0,j}\to u_0$ in $H^\alpha$ and $u_{1,j}\to u_1$ in $L^2$, and choose $f_j\in\mathcal{C}_0^\infty((0,T)\times\mathbb{R}^n)$ such that $f_j\to\mathcal{I}_\gamma(|u|^{p_1})|u|^{p_2}$ in $L^1([0,T];L^2)$. Let $u_j$ solve the linear inhomogeneous damped wave equation with data $(u_{0,j},u_{1,j})$ and source $f_j$. The finite-time homogeneous estimates and the uniform $L^2\to H^\alpha$ bound for $K_1$ on $[0,T]$ give
$$
\sup_{0\leq t\leq T}\|u_j(t)-u(t)\|_{H^\alpha}\lesssim\|u_{0,j}-u_0\|_{H^\alpha}+\|u_{1,j}-u_1\|_{L^2}+\|f_j-\mathcal{I}_\gamma(|u|^{p_1})|u|^{p_2}\|_{L^1([0,T];L^2)}\longrightarrow0.
$$
Integration by parts gives the identity corresponding to \eqref{eq:8.1}, with $u$ and $\mathcal{I}_\gamma(|u|^{p_1})|u|^{p_2}$ replaced by $u_j$ and $f_j$, respectively. Since $\partial_t^2\phi,\Delta\phi,\partial_t\phi\in L^1([0,T];L^2)$, the right-hand side converges by the preceding convergence in $\mathcal{C}([0,T],L^2)$. Moreover, $\phi\in L^\infty([0,T];L^2)$, and hence the source term converges by $f_j\to\mathcal{I}_\gamma(|u|^{p_1})|u|^{p_2}$ in $L^1([0,T];L^2)$, while the initial terms converge because $u_{0,j}\to u_0$ and $u_{1,j}\to u_1$ in $L^2$. Passing to the limit gives \eqref{eq:8.1} for $u$.
\end{remark}
\begin{remark}
The class of initial data in Theorem \ref{theorem:1.5} is nonempty. Let $m_\beta:=(2n)/(n+2\beta)>1$, take $C>0$ and set
$$
u_0(x)=u_1(x)=C\langle x\rangle^{-\frac{n}{2}-\beta}\left(\log \left(\mathrm{e}+|x|\right)\right)^{-1}.
$$
Then \eqref{condition_u0u1} follows immediately from $u_0(x)+u_1(x)=2C\langle x\rangle^{-n/2-\beta}\left(\log(\mathrm{e}+|x|)\right)^{-1}$. Since $(n/2+\beta)m_\beta=n$, the integrand below is bounded on $\{|x|\leq2\}$, whereas on $\{|x|\geq2\}$ we have $\langle x\rangle\sim|x|$ and $\log(\mathrm{e}+|x|)\sim\log|x|$. Thus polar coordinates yield
$$
\int_{\mathbb{R}^n}|u_0(x)|^{m_\beta}dx=C^{m_\beta}\int_{\mathbb{R}^n}\langle x\rangle^{-n}\left(\log \left(\mathrm{e}+|x|\right)\right)^{-m_\beta}dx \lesssim 1+\int_2^{\infty}r^{-1}(\log r)^{-m_\beta}dr <\infty.
$$
The same argument applies to $u_1$, and hence $u_0,u_1\in L^{m_\beta}$. Since $m_\beta<\infty$, these functions belong to the standard homogeneous realization $\mathcal{S}'_h$. Indeed, convolution with a standard low-frequency kernel at scale $2^j$ is bounded in $L^\infty$ by $C2^{jn/m_\beta}\|f\|_{L^{m_\beta}}$, which tends to zero as $j\to-\infty$. Since $m_\beta>1$, H\"older's inequality gives
$$
\|f\|_{L^1(\mathcal{B}_R)}\leq|\mathcal{B}_R|^{1-\frac{1}{m_\beta}}\|f\|_{L^{m_\beta}},
$$
and therefore $u_0,u_1\in L_\mathrm{loc}^1$. If $\beta>0$, then $r:=n/(n-\beta)\in(1,2)$ satisfies $1+1/2=1/m_\beta+1/r$ and $1-1/r=\beta/n$. For $f\in L^{m_\beta}$, the heat kernel $G_t(x)=(4\pi t)^{-n/2}\mathrm{e}^{-|x|^2/(4t)}$ satisfies $\|G_t\|_{L^r}=C_r t^{-(n/2)(1-1/r)}$. Young's inequality therefore gives
$$
\|\mathrm{e}^{t\Delta}f\|_{L^2}\leq\|G_t\|_{L^r}\|f\|_{L^{m_\beta}}\lesssim t^{-\frac{n}{2}(1-\frac{1}{r})}\|f\|_{L^{m_\beta}}=t^{-\frac{\beta}{2}}\|f\|_{L^{m_\beta}}.
$$
Thus $\sup_{t>0}t^{\beta/2}\|\mathrm{e}^{t\Delta}f\|_{L^2}\lesssim\|f\|_{L^{m_\beta}}$, and the heat-semigroup characterization gives $L^{m_\beta}\hookrightarrow\dot{B}_{2,\infty}^{-\beta}$. If $\beta=0$, then $m_\beta=2$ and $L^2\hookrightarrow\dot{B}_{2,\infty}^0$ follows directly from $\|\dot{\Delta}_j f\|_{L^2}\lesssim\|f\|_{L^2}$ uniformly in $j$. Thus the class is nonempty. In fact, the same example belongs to the Sobolev-Besov class used in Theorem \ref{theorem:1.3}. Indeed, with $a:=n/2+\beta$ and $L(x):=\log(\mathrm{e}+|x|)$, we have 
$$
\int_{\mathbb{R}^n}|u_0(x)|^2dx\lesssim C^2\left(1+\int_2^\infty r^{-1-2\beta}(\log r)^{-2}dr\right)<\infty,
$$
and, for almost every $x$,
$$
|\nabla u_0(x)|\lesssim C\left(\langle x\rangle^{-a-1}L(x)^{-1}+\langle x\rangle^{-a}(\mathrm{e}+|x|)^{-1}L(x)^{-2}\right).
$$
The right-hand side is square integrable, so $u_0\in H^1$; the same argument applies to $u_1$. Hence $u_0,u_1\in H^1\cap\dot{B}_{2,\infty}^{-\beta}$ and therefore belong to the initial-data class of Theorem \ref{theorem:1.3} for every $\alpha\in(0,1]$. Since all the relevant norms are proportional to $C$, these data can be made arbitrarily small by taking $C>0$ sufficiently small.
\end{remark}
\begin{remark}
Assume that the critical line $p_1+p_2=p_{\mathrm{Fuji}}((n+2\beta)/(2+\gamma))$ contains a pair satisfying all the assumptions of Theorem \ref{theorem:1.3} and that the interval in Theorem \ref{theorem:1.5} is nonempty. Then Theorems \ref{theorem:1.3} and \ref{theorem:1.5} identify $p_{\mathrm{Fuji}}((n+2\beta)/(2+\gamma))=1+(4+2\gamma)/(n+2\beta)$ as the boundary value for the sum $p_1+p_2$ in this regime. The preceding remark also shows that the lower-bound data in Theorem \ref{theorem:1.5} can be chosen arbitrarily small in the same Sobolev-Besov class used in the existence theorem. The componentwise restrictions in Theorem \ref{theorem:1.3} arise from the strong Hardy-Littlewood-Sobolev inequality and the fractional Gagliardo-Nirenberg interpolation used in the fixed-point argument.
\end{remark}
\begin{remark}
We record explicit conditions ensuring that the critical line contains admissible pairs for the existence theorem.
At the critical value $p_1+p_2=p_{\mathrm{Fuji}}((n+2\beta)/(2+\gamma))$, the assumptions of Theorem \ref{theorem:1.3} can be satisfied as follows.
\begin{itemize}
\item If $n\leq2\alpha$ and $p_{\mathrm{Fuji}}((n+2\beta)/(2+\gamma))>1+(2\beta+2\gamma)/n$, choose
$$
\max\left\{1,\frac{2\gamma}{n}\right\}<p_1<p_{\mathrm{Fuji}}\left(\frac{n+2\beta}{2+\gamma}\right)-\max\left\{1,\frac{2(\beta+\gamma)}{n}-1\right\}
$$
whenever this interval is nonempty, and set $p_2:=p_{\mathrm{Fuji}}\left(\frac{n+2\beta}{2+\gamma}\right)-p_1$. Then $p_1,p_2\geq1$, $p_1>\frac{2\gamma}{n}$, $p_2>\frac{2(\beta+\gamma)}{n}-1$, and $p_1+p_2=p_{\mathrm{Fuji}}\left(\frac{n+2\beta}{2+\gamma}\right)$.
\item If $n>2\alpha$, assume $p_{\mathrm{Fuji}}((n+2\beta)/(2+\gamma))>1+(2\beta+2\gamma)/n$ and $p_{\mathrm{Fuji}}((n+2\beta)/(2+\gamma))\leq (n+2\gamma)/(n-2\alpha)$, and choose
$$
\begin{aligned}
&\max\left\{1,\frac{2\gamma}{n},p_{\mathrm{Fuji}}\left(\frac{n+2\beta}{2+\gamma}\right)-\frac{n}{n-2\alpha}\right\}<p_1\\
<&\min\left\{\frac{2n}{n-2\alpha},p_{\mathrm{Fuji}}\left(\frac{n+2\beta}{2+\gamma}\right)-1,p_{\mathrm{Fuji}}\left(\frac{n+2\beta}{2+\gamma}\right)-\frac{2(\beta+\gamma)}{n}+1\right\}.
\end{aligned}
$$
whenever this interval is nonempty, and again set $p_2:=p_{\mathrm{Fuji}}((n+2\beta)/(2+\gamma))-p_1$. Then all componentwise assumptions of Theorem \ref{theorem:1.3}, including $p_1<(2n)/(n-2\alpha)$ and $p_2<n/(n-2\alpha)$, hold at the critical value.
\end{itemize}
Thus, whenever one of the intervals above is nonempty, the critical line $p_1+p_2=p_{\mathrm{Fuji}}((n+2\beta)/(2+\gamma))$ contains admissible pairs for the global existence theorem. On the other hand, Theorem \ref{theorem:1.5} excludes global weak solutions for $2<p_1+p_2<p_{\mathrm{Fuji}}((n+2\beta)/(2+\gamma))$ when the initial data satisfy \eqref{condition_u0u1}. Together with the preceding small-data observation, this shows that the same boundary value is obtained from both sides within the same initial-data framework.
\end{remark}
\begin{remark}
We take $\alpha=1$ as an example and describe how the parameter restrictions in Theorems \ref{theorem:1.3} and \ref{theorem:1.5} depend on the dimension. The subcritical interval in Theorem \ref{theorem:1.5} is nonempty if and only if $n+2\beta<4+2\gamma$. At the critical value
$$
p_1+p_2=p_{\mathrm{Fuji}}\left(\frac{n+2\beta}{2+\gamma}\right),
$$
the condition $p_1+p_2>1+(2\beta+2\gamma)/n$ in Theorem \ref{theorem:1.3} is equivalent to $\beta(n+2\beta+2\gamma)<2n$. If $n\geq3$, the first condition also implies the upper restriction on the sum at the critical value. Indeed,
$$
p_{\mathrm{Fuji}}\left(\frac{n+2\beta}{2+\gamma}\right)
\leq\frac{n+2\gamma}{n-2}
$$
is equivalent to $n\leq4+2\beta+2\gamma+2\beta\gamma$, which follows from $n+2\beta<4+2\gamma$. Under the conditions below, the componentwise restrictions on $p_1$ and $p_2$ are compatible with the critical value, and hence the critical existence ranges are nonempty. In the existence statements we keep the data and smallness assumptions of Theorem \ref{theorem:1.3}, while in the nonexistence statements we use the data assumptions of Theorem \ref{theorem:1.5}.
\begin{itemize}
\item When $n=1$, take $\alpha=1$, $0\leq\beta<1/2$ and $0<\gamma<1$. In this case both conditions above hold automatically. Theorem \ref{theorem:1.3} gives global existence for small data whenever $p_1,p_2\geq1$, $p_1>2\gamma$, $p_2>2(\beta+\gamma)-1$, and
$$
p_1+p_2\geq p_{\mathrm{Fuji}}\left(\frac{1+2\beta}{2+\gamma}\right).
$$
In contrast, Theorem \ref{theorem:1.5} gives nonexistence of global weak solutions for $p_1,p_2\geq 1$ satisfying
$$
2<p_1+p_2<p_{\mathrm{Fuji}}\left(\frac{1+2\beta}{2+\gamma}\right).
$$

\item When $n=2$, take $\alpha=1$, $0\leq\beta<1$, $0<\gamma<2$, $
\beta(1+\beta+\gamma)<2$. Here $n+2\beta<4+2\gamma$ is automatic. Theorem \ref{theorem:1.3} gives global small-data existence whenever $p_1,p_2\geq1$, $p_1>\gamma$, $p_2>\beta+\gamma-1$, and
$$
p_1+p_2\geq p_{\mathrm{Fuji}}\left(\frac{2+2\beta}{2+\gamma}\right).
$$
Theorem \ref{theorem:1.5}, on the other hand, gives nonexistence of global weak solutions for $p_1,p_2\geq1$ satisfying
$$
2<p_1+p_2<p_{\mathrm{Fuji}}\left(\frac{2+2\beta}{2+\gamma}\right).
$$

\item When $n=3$, take $\alpha=1$, $0\leq\beta<3/2$, $
\max\left\{0,\beta-1/2\right\}<\gamma<3$, $\beta(3+2\beta+2\gamma)<6$. Then Theorem \ref{theorem:1.3} gives global small-data existence whenever $p_1,p_2\geq1$, $
p_1>(2\gamma)/3$, $
p_2>(2\beta+2\gamma)/3-1$, $p_1<6$, $p_2<3$, and
$$
p_{\mathrm{Fuji}}\left(\frac{3+2\beta}{2+\gamma}\right)
\leq p_1+p_2\leq3+2\gamma.
$$
Theorem \ref{theorem:1.5} gives nonexistence of global weak solutions for $p_1,p_2 \geq 1$ satisfying
$$
2<p_1+p_2<p_{\mathrm{Fuji}}\left(\frac{3+2\beta}{2+\gamma}\right).
$$

\item When $n=4$, take $\alpha=1$, $0\leq\beta<2$, $
\beta<\gamma<4$, $\beta(4+2\beta+2\gamma)<8$. Then Theorem \ref{theorem:1.3} gives global small-data existence whenever $p_1,p_2\geq1$, $p_1>\gamma/2$, $p_2>(\beta+\gamma)/2-1$, $p_1<4$, $p_2<2$, and
$$
p_{\mathrm{Fuji}}\left(\frac{4+2\beta}{2+\gamma}\right)
\leq p_1+p_2\leq2+\gamma.
$$
Theorem \ref{theorem:1.5} gives nonexistence of global weak solutions for $p_1,p_2 \geq 1$ satisfying
$$
2<p_1+p_2<p_{\mathrm{Fuji}}\left(\frac{4+2\beta}{2+\gamma}\right).
$$

\item When $n\geq5$, take $\alpha=1$, $0\leq\beta<n/2$, $n/2+\beta-2<\gamma<n$, $\beta(n+2\beta+2\gamma)<2n$. Then Theorem \ref{theorem:1.3} gives global existence for small data whenever $p_1,p_2\geq1$, $p_1>(2\gamma)/n$, $p_2>(2\beta+2\gamma)/n-1$, $p_1<(2n)/(n-2)$, $p_2<n/(n-2)$,
and
$$
p_{\mathrm{Fuji}}\left(\frac{n+2\beta}{2+\gamma}\right)
\leq p_1+p_2\leq\frac{n+2\gamma}{n-2}.
$$
Theorem \ref{theorem:1.5} gives nonexistence of global weak solutions for $p_1,p_2\geq1$ satisfying
$$
2<p_1+p_2<p_{\mathrm{Fuji}}\left(\frac{n+2\beta}{2+\gamma}\right).
$$
\end{itemize}
Therefore, for $\alpha=1$, the parameter ranges above provide nonempty critical existence regimes together with nonempty strictly subcritical nonexistence regimes in every space dimension. In these ranges, the existence and nonexistence results meet at the same Fujita-type threshold for the sum $p_1+p_2$, with the two conclusions lying on the corresponding sides of the threshold.
\end{remark}

\noindent \textbf{Notations.}
We use $\mathbb{N}^*:=\{1,2,\ldots\}$. We write $f\lesssim g$ when there exists a constant $C>0$ such that $f\leq Cg$, set $f\gtrsim g$ if $g\lesssim f$, and write $f\sim g$ if both $f\lesssim g$ and $f\gtrsim g$ hold. Moreover, $X=O(Y)$ means that $|X|\leq C|Y|$ in the asymptotic regime under consideration, with $C>0$ independent of the limiting variable. For brevity, we write $L_\mathrm{loc}^1, H^s, \dot{H}^s, L^p, \dot{B}_{p, q}^s$ and $\ell^q$ instead of $L_\mathrm{loc}^1\left(\mathbb{R}^n\right), H^s\left(\mathbb{R}^n\right), \dot{H}^s\left(\mathbb{R}^n\right), L^p\left(\mathbb{R}^n\right), \dot{B}_{p, q}^s\left(\mathbb{R}^n\right)$ and $\ell^q(\mathbb{Z})$, respectively. As usual, the spaces $H^s$ and $\dot{H}^s$ with $s \geq 0$ stand for Bessel and Riesz potential spaces based on $L^2$. Negative-order homogeneous Sobolev spaces are understood in the standard homogeneous distributional realization; in particular, $\dot{H}^{-\beta}=\dot{B}_{2,2}^{-\beta}$. For two Banach spaces $X$ and $Y$, we use $\left\|f\right\|_{X\cap Y}:=\left\|f\right\|_X+\left\|f\right\|_Y$.

\noindent \textbf{The paper is organized as follows.} In Section \ref{section_4}, we prove Theorem \ref{theorem:1.1}. In Section \ref{section_5}, we prove the global small-data existence result in Theorem \ref{theorem:1.3}. Section \ref{section_6} is devoted to the nonexistence result in Theorem \ref{theorem:1.5}. Finally, Section \ref{section_tools} collects the tools from harmonic analysis used in the proofs.

\section{Decay estimates for the solution of the linear Cauchy problem} \label{section_4}
In this section, we derive decay estimates for the linear Cauchy problem \eqref{eq:1.1} with initial data in homogeneous Besov spaces. Let $u$ denote the solution to \eqref{eq:1.1}. Taking the Fourier transform in $x$ of \eqref{eq:1.1}, we obtain the following parameter-dependent ordinary differential equation for $\widehat{u}(t, \xi)$
$$
\begin{cases}
\partial_t^2 \widehat{u}+\partial_t \widehat{u}+|\xi|^2 \widehat{u}=0, \quad t>0, \\
\widehat{u}(0, \xi)=\widehat{u}_0(\xi), \quad \partial_t \widehat{u}(0, \xi)=\widehat{u}_1(\xi).
\end{cases}
$$
The roots of the characteristic equation $\tau^2+\tau+|\xi|^2=0$ are
$$
\tau_{ \pm}= \begin{cases}-\frac{1}{2} \pm i \sqrt{|\xi|^2-\frac{1}{4}} & \text { if } |\xi|>\frac{1}{2}, \\
-\frac{1}{2} & \text { if } |\xi|=\frac{1}{2}, \\
-\frac{1}{2} \pm \sqrt{\frac{1}{4}-|\xi|^2} & \text { if } |\xi|<\frac{1}{2}.
\end{cases}
$$
A direct computation gives the representation formula
\begin{equation} \label{eq:4.4}
\widehat{u}(t, \xi)=\widehat{u}_0(\xi) \widehat{K}_0(t, \xi)+\left(\frac{1}{2} \widehat{u}_0(\xi)+\widehat{u}_1(\xi)\right) \widehat{K}_1(t, \xi)=\left(\widehat{K}_0(t, \xi)+\frac{1}{2}\widehat{K}_1(t, \xi)\right)\widehat{u}_0(\xi)+\widehat{K}_1(t, \xi) \widehat{u}_1(\xi),
\end{equation}
where $\widehat{K}_0$ satisfies
\begin{equation} \label{eq:F}
\begin{cases}
\partial_t^2 \widehat{K}_0(t, \xi)+\partial_t \widehat{K}_0(t, \xi)+|\xi|^2 \widehat{K}_0(t, \xi)=0, \\
\widehat{K}_0(0, \xi)=1, \quad \partial_t \widehat{K}_0(0, \xi)=-1/2,
\end{cases}
\end{equation}
and $\widehat{K}_1$ satisfies
\begin{equation} \label{eq:G}
\begin{cases}
\partial_t^2 \widehat{K}_1(t, \xi)+\partial_t \widehat{K}_1(t, \xi)+|\xi|^2 \widehat{K}_1(t, \xi)=0, \\
\widehat{K}_1(0, \xi)=0, \quad \partial_t \widehat{K}_1(0, \xi)=1.
\end{cases}
\end{equation}
Solving \eqref{eq:F} and \eqref{eq:G} explicitly, we obtain
\begin{align*}
\widehat{K}_0(t, \xi):=&
\begin{cases}
\mathrm{e}^{-t/2}\cosh\left(\frac{\sqrt{1-4|\xi|^2}}{2}t\right), & |\xi|<\frac{1}{2},\\
\mathrm{e}^{-t/2}, & |\xi|=\frac{1}{2},\\
\mathrm{e}^{-t/2}\cos\left(\frac{\sqrt{4|\xi|^2-1}}{2}t\right), & |\xi|>\frac{1}{2},
\end{cases} \\
\widehat{K}_1(t, \xi):=&
\begin{cases}
\frac{2\mathrm{e}^{-t/2}}{\sqrt{1-4|\xi|^2}}\sinh\left(\frac{\sqrt{1-4|\xi|^2}}{2}t\right), & |\xi|<\frac{1}{2},\\
t\mathrm{e}^{-t/2}, & |\xi|=\frac{1}{2},\\
\frac{2\mathrm{e}^{-t/2}}{\sqrt{4|\xi|^2-1}}\sin\left(\frac{\sqrt{4|\xi|^2-1}}{2}t\right), & |\xi|>\frac{1}{2}.
\end{cases}
\end{align*}
\begin{proof}[Proof of Theorem \ref{theorem:1.1}]
From \eqref{eq:4.4}, we have
$$
u(t,\cdot{})=\left(K_0(t,\cdot{})+\frac{1}{2}K_1(t,\cdot{})\right)*u_0+K_1(t,\cdot{})*u_1.
$$
We decompose the homogeneous Littlewood-Paley blocks into the ranges $j\leq-3$, $-2\leq j\leq0$ and $j\geq1$.

For $j\leq-3$, the support of $\dot{\Delta}_j$ is contained in $\{|\xi|<1/2\}$. We set $\xi=2^j\eta$ and
$$
a_j(\eta):=\frac{2|\eta|^2}{1+\sqrt{1-2^{2j}4|\eta|^2}},\quad b_j(\eta):=1-2^{2j}a_j(\eta),\quad d_j(\eta):=\sqrt{1-2^{2j}4|\eta|^2}.
$$
On $\operatorname{supp}\varphi$, $3/4\leq|\eta|\leq8/3$, so $2^{2j}4|\eta|^2\leq4/9$. Hence $d_j\geq\sqrt5/3$. Since
$$
b_j=1-\frac{2^{2j}2|\eta|^2}{1+d_j}=\frac{1+d_j}{2},
$$
the functions $a_j$ and $b_j$ are bounded above and below by positive constants uniformly in $j$. Moreover, for every multi-index $\nu$ with $|\nu|\geq1$,
$$
|\partial_\eta^\nu a_j(\eta)|\leq C_\nu,\quad
|\partial_\eta^\nu b_j(\eta)|+|\partial_\eta^\nu d_j(\eta)|+|\partial_\eta^\nu d_j(\eta)^{-1}|\leq C_\nu2^{2j}.
$$
Indeed, $d_j$ is a smooth composition of $(1-z)^{1/2}$ with $z=2^{2j}4|\eta|^2$, and $z\leq4/9$ on the fixed annulus. Hence every nonzero derivative of $d_j$ contains at least one factor $2^{2j}$; the same holds for $d_j^{-1}$. The estimate for $b_j$ follows from $b_j=1-2^{2j}a_j$, while the formula $a_j=2|\eta|^2/(1+d_j)$ gives the stated bound for the derivatives of $a_j$.

The characteristic roots can be written as $\tau_+(2^j\eta)=-2^{2j}a_j(\eta)$ and $\tau_-(2^j\eta)=-b_j(\eta)$. Hence
$$
\widehat{K}_1(t,2^j\eta)=\frac{\mathrm{e}^{-t2^{2j}a_j(\eta)}-\mathrm{e}^{-tb_j(\eta)}}{d_j(\eta)}, \quad \widehat{K}_0(t,2^j\eta)+\frac{1}{2}\widehat{K}_1(t,2^j\eta)=\frac{b_j(\eta)\mathrm{e}^{-t2^{2j}a_j(\eta)}-2^{2j}a_j(\eta)\mathrm{e}^{-tb_j(\eta)}}{d_j(\eta)}.
$$
Let $N=[n/2]+1$. For $|\kappa|\leq N$, the chain rule expresses each derivative of $\mathrm{e}^{-t2^{2j}a_j}$ as a finite sum of terms of the form
$$
\mathrm{e}^{-t2^{2j}a_j}(t2^{2j})^\ell
\prod_{\nu=1}^{\ell}\partial_\eta^{\kappa_\nu}a_j,
\quad
1\leq\ell\leq|\kappa|,\quad
\kappa_1+\cdots+\kappa_\ell=\kappa,
$$
with the usual interpretation when $\kappa=0$. These terms are bounded by $C_\kappa\mathrm{e}^{-c t2^{2j}}$, since $x^\ell\mathrm{e}^{-cx}\leq C_{\ell,c}\mathrm{e}^{-c'x}$ for $x\geq0$. Similarly, every derivative of $\mathrm{e}^{-tb_j}$ is bounded by a finite sum of $(t2^{2j})^\ell\mathrm{e}^{-ct}$, because each nonzero derivative of $b_j$ is $O(2^{2j})$. Combining these bounds with the derivative estimates for the prefactors, and decreasing $c>0$ if necessary, we obtain
\begin{align*}
\left|\partial_\eta^\kappa\left(\widehat{K}_0(t,2^j\eta)+\frac{1}{2}\widehat{K}_1(t,2^j\eta)\right)\right|
+\left|\partial_\eta^\kappa\widehat{K}_1(t,2^j\eta)\right|
\lesssim & \mathrm{e}^{-ct2^{2j}}, \\
\left|\partial_\eta^\kappa\partial_t\left(\widehat{K}_0(t,2^j\eta)+\frac{1}{2}\widehat{K}_1(t,2^j\eta)\right)\right|
\lesssim & 2^{2j}\mathrm{e}^{-ct2^{2j}},\\
\left|\partial_\eta^\kappa\partial_t\widehat{K}_1(t,2^j\eta)\right|
\lesssim & 2^{2j}\mathrm{e}^{-ct2^{2j}}+\mathrm{e}^{-ct}.
\end{align*}
For the second estimate we use the identity $\partial_t(\widehat{K}_0+\widehat{K}_1/2)=-|\xi|^2\widehat{K}_1$. For the third one, we use
$$
\partial_t\widehat{K}_1(t,2^j\eta)
=\frac{-2^{2j}a_j(\eta)\mathrm{e}^{-t2^{2j}a_j(\eta)}+b_j(\eta)\mathrm{e}^{-tb_j(\eta)}}{d_j(\eta)}.
$$

We now pass from the rescaled symbols to $L^q$ multiplier estimates. Choose $\widetilde\varphi\in\mathcal{C}_c^\infty(\mathbb{R}^n\setminus\{0\})$ equal to one on $\operatorname{supp}\varphi$. Multiplication by $\widetilde\varphi(\eta)$ does not change the action of any of the symbols above on $\dot{\Delta}_j f$. Since $|\eta|\sim1$ on the support of $\widetilde\varphi$, the preceding derivative estimates imply the Mikhlin bounds
$$
\sup_{|\kappa|\leq N}\sup_{\eta\neq0}
|\eta|^{|\kappa|}
\left|\partial_\eta^\kappa\left[\widetilde\varphi(\eta)M(t,2^j\eta)\right]\right|
\lesssim A_j(t),
$$
where $A_j(t)$ denotes the corresponding right-hand side above. By scaling, the $L^q$ operator norm of the multiplier with symbol $\widetilde\varphi(2^{-j}\xi)M(t,\xi)$ agrees with that of the rescaled multiplier in $\eta$. Since $1<q<\infty$, the Mikhlin theorem gives, uniformly for $j\leq-3$,
\begin{align}
\label{ineq:dyadic-low-Lq}
\left\|\dot{\Delta}_j u(t)\right\|_{L^q}
\lesssim& \mathrm{e}^{-ct2^{2j}}\left(\left\|\dot{\Delta}_j u_0\right\|_{L^q}+\left\|\dot{\Delta}_j u_1\right\|_{L^q}\right),\\
\label{ineq:dyadic-low-Lq-time}
\left\|\dot{\Delta}_j\partial_tu(t)\right\|_{L^q}
\lesssim&\left(2^{2j}\mathrm{e}^{-ct2^{2j}}+\mathrm{e}^{-ct}\right)
\left(\left\|\dot{\Delta}_j u_0\right\|_{L^q}+\left\|\dot{\Delta}_j u_1\right\|_{L^q}\right).
\end{align}
Since $m\leq q$, Bernstein's inequality together with the definition of $\dot{B}_{m,\infty}^{-\beta}$ gives
\begin{equation}
\label{ineq:bernstein-low}
\left\|\dot{\Delta}_j f\right\|_{L^q}
\lesssim2^{jn\left(\frac{1}{m}-\frac{1}{q}\right)}\left\|\dot{\Delta}_j f\right\|_{L^m}
\lesssim2^{j\left(\beta+n\left(\frac{1}{m}-\frac{1}{q}\right)\right)}\left\|f\right\|_{\dot{B}_{m,\infty}^{-\beta}}.
\end{equation}

We next consider the three fixed blocks $-2\leq j\leq0$. Their Fourier supports lie in a compact set separated from the origin. Choose $\varepsilon>0$ so small that $|1/4-|\xi|^2|^{1/2}\leq1/4$ whenever $||\xi|-1/2|\leq2\varepsilon$, and choose a smooth partition of unity on this compact set subordinate to
$$
\left\{|\xi|<\frac{1}{2}-\varepsilon\right\},\quad
\left\{\left||\xi|-\frac{1}{2}\right|<2\varepsilon\right\},\quad
\left\{|\xi|>\frac{1}{2}+\varepsilon\right\}.
$$
On the first region, $\tau_+\leq-c_\varepsilon<0$ and $\tau_+-\tau_-$ stays away from zero, so differentiating the root formulas gives $C_\kappa(1+t)^{N_\kappa}\mathrm{e}^{-c_\varepsilon t}$. On the third region, $\omega(\xi):=\sqrt{|\xi|^2-1/4}$ stays away from zero, and the trigonometric formulas give $C_\kappa(1+t)^{N_\kappa}\mathrm{e}^{-t/2}$.

On the transition region, we use the series representations
$$
\widehat{K}_0(t,\xi)=\mathrm{e}^{-t/2}\sum_{\ell=0}^{\infty}\frac{t^{2\ell}(1/4-|\xi|^2)^\ell}{(2\ell)!},\quad
\widehat{K}_1(t,\xi)=\mathrm{e}^{-t/2}\sum_{\ell=0}^{\infty}\frac{t^{2\ell+1}(1/4-|\xi|^2)^\ell}{(2\ell+1)!}.
$$
For every fixed multi-index $\kappa$, the differentiated series converge uniformly on this compact region. Indeed, after $|\kappa|$ derivatives in $\xi$, the $\ell$-th term is bounded by a polynomial in $\ell$ times $t^{2\ell}$ or $t^{2\ell+1}$ and $(1/4)^{2\max\{\ell-|\kappa|,0\}}$. Hence the resulting series is bounded by $C_\kappa(1+t)^{N_\kappa}\mathrm{e}^{t/4}$ and, after multiplication by $\mathrm{e}^{-t/2}$, by $C_\kappa(1+t)^{N_\kappa}\mathrm{e}^{-t/4}$. The same argument applies after one time derivative. Since $(1+t)^M\mathrm{e}^{-at}\lesssim\mathrm{e}^{-a t/2}$ for every fixed $M$ and $a>0$, all derivatives up to order $N$ of $\widehat{K}_0+\widehat{K}_1/2$, $\widehat{K}_1$ and their time derivatives are bounded by $C_\kappa\mathrm{e}^{-ct}$ on the three regions. The Mikhlin theorem applied to the localized multipliers then gives
\begin{equation}
\label{ineq:dyadic-middle}
\left\|\dot{\Delta}_j u(t)\right\|_{L^q}
+\left\|\dot{\Delta}_j\partial_tu(t)\right\|_{L^q}
\lesssim\mathrm{e}^{-ct}
\left(\left\|\dot{\Delta}_j u_0\right\|_{L^q}+\left\|\dot{\Delta}_j u_1\right\|_{L^q}\right),
\quad -2\leq j\leq0.
\end{equation}

For $j\geq1$, we use the sharp high-frequency estimate from \cite{Ikeda2019} in the homogeneous Besov form stated in \cite[Proposition 2.2 and Remark 2.3(iii)]{Ikeda2026}. In that proposition, the operator $\mathcal{D}(t)$ has multiplier $\widehat{K}_1(t,\xi)$. We identify their output exponent $p$ with our $q$, keep the same third Besov index $r$, and restrict the datum to high frequencies. Taking $s_1=s_2=s$, Proposition 2.2 gives
\begin{align*}
\left\|\left(2^{js}\left\|\dot{\Delta}_j\left(K_1(t)*g\right)\right\|_{L^q}\right)_{j\geq1}\right\|_{\ell^r} \lesssim &\mathrm{e}^{-ct}\left\|g\right\|_{B_{q,r}^{s+\sigma_q-1}}, \\ \left\|\left(2^{js}\left\|\dot{\Delta}_j\left(\partial_tK_1(t)*g\right)\right\|_{L^q}\right)_{j\geq1}\right\|_{\ell^r} \lesssim &\mathrm{e}^{-ct}\left\|g\right\|_{B_{q,r}^{s+\sigma_q}},
\end{align*}
because $\mathrm{e}^{-t/2}\langle t\rangle^{\delta_q}\lesssim\mathrm{e}^{-ct}$ and the high-frequency parts of the homogeneous and inhomogeneous Besov norms are equivalent. Since $K_0+K_1/2=(\partial_t+1)K_1$, we obtain
$$
\left\|\left(2^{js}\left\|\dot{\Delta}_j u(t)\right\|_{L^q}\right)_{j\geq1}\right\|_{\ell^r}
\lesssim\mathrm{e}^{-ct}\left(\left\|u_0\right\|_{B_{q,r}^{s+\sigma_q}}+\left\|u_1\right\|_{B_{q,r}^{s+\sigma_q-1}}\right).
$$
For the time derivative, the equation for $K_1$ gives $\partial_tu=\Delta K_1(t)*u_0+\partial_tK_1(t)*u_1$. Applying Proposition 2.2 to $K_1*u_0$ with $s_1=s_2=s+2$ and using $\|\Delta f\|_{\dot{B}_{q,r}^{s}}\lesssim\|f\|_{\dot{B}_{q,r}^{s+2}}$ on high frequencies, we get
$$
\left\|\left(2^{js}\left\|\dot{\Delta}_j\partial_tu(t)\right\|_{L^q}\right)_{j\geq1}\right\|_{\ell^r}
\lesssim\mathrm{e}^{-ct}\left(\left\|u_0\right\|_{B_{q,r}^{s+1+\sigma_q}}+\left\|u_1\right\|_{B_{q,r}^{s+\sigma_q}}\right).
$$

Set $\Lambda:=s+\beta+n(1/m-1/q)>0$. From \eqref{ineq:dyadic-low-Lq} and \eqref{ineq:bernstein-low} we obtain, for $j\leq-3$,
$$
2^{js}\left\|\dot{\Delta}_j u(t)\right\|_{L^q}
\lesssim\mathrm{e}^{-ct2^{2j}}2^{j\Lambda}
\left(\left\|u_0\right\|_{\dot{B}_{m,\infty}^{-\beta}}+\left\|u_1\right\|_{\dot{B}_{m,\infty}^{-\beta}}\right).
$$
We use the following dyadic estimate:
\begin{equation}
\label{ineq:dyadic-sum}
\left\|\left(\mathrm{e}^{-ct2^{2j}}2^{j\Lambda}\right)_{j\leq-3}\right\|_{\ell^r}
\lesssim(1+t)^{-\frac{\Lambda}{2}}.
\end{equation}
For $0\leq t\leq64$, the estimate follows from the convergent geometric series $\sum_{j\leq-3}2^{jr\Lambda}$ when $r<\infty$ and from $\sup_{j\leq-3}2^{j\Lambda}=2^{-3\Lambda}$ when $r=\infty$. Let $t>64$ and choose $j_t\leq-3$ so that $2^{2j_t}\leq t^{-1}<2^{2(j_t+1)}$. Then $2^{j_t}\sim t^{-1/2}$ and $t2^{2j_t}>1/4$. If $r<\infty$,
$$
\sum_{j\leq j_t}\mathrm{e}^{-crt2^{2j}}2^{jr\Lambda}\leq\sum_{j\leq j_t}2^{jr\Lambda}\lesssim t^{-\frac{r\Lambda}{2}}, \quad \sum_{j_t<j\leq-3}\mathrm{e}^{-crt2^{2j}}2^{jr\Lambda}\leq2^{j_tr\Lambda}\sum_{k\geq1}\mathrm{e}^{-c_14^k}2^{kr\Lambda}
\lesssim t^{-\frac{r\Lambda}{2}}
$$
for some $c_1>0$. If $r=\infty$, the first range is bounded by $2^{j_t\Lambda}$ and the second one by
$$
2^{j_t\Lambda}\sup_{k\geq1}\mathrm{e}^{-c_14^k}2^{k\Lambda}\lesssim t^{-\Lambda/2}.
$$
This proves \eqref{ineq:dyadic-sum}.

The low-frequency contribution to \eqref{ineq:besov-linear} follows from \eqref{ineq:dyadic-sum}. For the three middle blocks, any two fixed powers of $2^j$ are comparable, and $\dot{\Delta}_j f$ is controlled in $L^q$ by the low-frequency part of the inhomogeneous Besov norm. Therefore \eqref{ineq:dyadic-middle} together with $\mathrm{e}^{-ct}\lesssim(1+t)^{-\Lambda/2}$ yields the same decay. Combining the low-, middle- and high-frequency estimates proves \eqref{ineq:besov-linear} for $t>0$. At $t=0$, $u(0)=u_0$; the low-frequency blocks are summable because \eqref{ineq:bernstein-low} gives $2^{js}\|\dot{\Delta}_j u_0\|_{L^q}\lesssim2^{j\Lambda}\|u_0\|_{\dot{B}_{m,\infty}^{-\beta}}$, while the middle and high frequencies are controlled by $B_{q,r}^{s+\sigma_q}$. Hence \eqref{ineq:besov-linear} also holds at $t=0$.

For the time derivative, \eqref{ineq:dyadic-low-Lq-time} and \eqref{ineq:bernstein-low} give, for $j\leq-3$,
$$
2^{js}\left\|\dot{\Delta}_j\partial_tu(t)\right\|_{L^q}
\lesssim \left(\mathrm{e}^{-ct2^{2j}}2^{j(\Lambda+2)}+\mathrm{e}^{-ct}2^{j\Lambda}\right)\left(\left\|u_0\right\|_{\dot{B}_{m,\infty}^{-\beta}}+\left\|u_1\right\|_{\dot{B}_{m,\infty}^{-\beta}}\right).
$$
Applying the preceding dyadic summation with $\Lambda+2$ to the first term gives $(1+t)^{-\Lambda/2-1}$. For the second term, $\Lambda>0$ implies $\|(2^{j\Lambda})_{j\leq-3}\|_{\ell^r}<\infty$, while $\mathrm{e}^{-ct}\lesssim(1+t)^{-\Lambda/2-1}$. The middle- and high-frequency terms decay exponentially and therefore satisfy the same polynomial estimate. This proves \eqref{ineq:besov-linear-time} for $t>0$. At $t=0$, $\partial_tu(0)=u_1$, and the same low-frequency summability argument, together with the middle- and high-frequency control by $B_{q,r}^{s+\sigma_q}$, proves the estimate at $t=0$.

We now prove the Hilbert-space estimates stated in Remark \ref{remark:hilbert-linear}. Let $0\leq\alpha\leq1$. If $\alpha+\beta>0$, \eqref{ineq:dyadic-low-Lq} with $m=q=2$ gives, for $j\leq-3$,
$$
2^{j\alpha}\|\dot{\Delta}_j u(t)\|_{L^2}
\lesssim\mathrm{e}^{-ct2^{2j}}2^{j(\alpha+\beta)}
\left(\|u_0\|_{\dot{B}_{2,\infty}^{-\beta}}+\|u_1\|_{\dot{B}_{2,\infty}^{-\beta}}\right).
$$
Taking the $\ell^2$ norm and using the dyadic summation with $\Lambda=\alpha+\beta$ gives $(1+t)^{-(\alpha+\beta)/2}$. The three middle blocks are bounded by $\mathrm{e}^{-ct}(\|u_0\|_{H^\alpha}+\|u_1\|_{L^2})$. At high frequencies, $\sigma_2=0$, and the preceding estimate yields
$$
\left\|\left(2^{j\alpha}\|\dot{\Delta}_j u(t)\|_{L^2}\right)_{j\geq1}\right\|_{\ell^2}
\lesssim\mathrm{e}^{-ct}\left(\|u_0\|_{H^\alpha}+\|u_1\|_{H^{\alpha-1}}\right)
\lesssim\mathrm{e}^{-ct}\left(\|u_0\|_{H^\alpha}+\|u_1\|_{L^2}\right),
$$
because $\alpha\leq1$. This proves \eqref{ineq:1.3} when $\alpha+\beta>0$.

It remains to consider the endpoint $\alpha=\beta=0$. On $|\xi|\leq1/4$, the root formulas above and the lower bound for $d(\xi):=\sqrt{1-4|\xi|^2}$ give
$$
\left|\widehat{K}_1(t,\xi)\right|
+\left|\widehat{K}_0(t,\xi)+\frac{1}{2}\widehat{K}_1(t,\xi)\right|\leq C
$$
uniformly in $t\geq0$. On the compact middle-frequency region, this follows from the estimates already proved; on $|\xi|\geq1$, it follows directly from the trigonometric formulas, since $\sqrt{4|\xi|^2-1}\gtrsim|\xi|$. Hence both solution multipliers are uniformly bounded on $[0,\infty)\times\mathbb{R}^n$, and Plancherel's theorem proves \eqref{ineq:1.3} in the endpoint case.

For \eqref{ineq:1.4}, assume first that $\beta>0$. From \eqref{ineq:dyadic-low-Lq-time} with $m=q=2$,
$$
\|\left(\|\dot{\Delta}_j\partial_tu(t)\|_{L^2}\right)_{j\leq-3}\|_{\ell^2}
\lesssim(1+t)^{-\frac{\beta}{2}-1}
\left(\|u_0\|_{\dot{B}_{2,\infty}^{-\beta}}+\|u_1\|_{\dot{B}_{2,\infty}^{-\beta}}\right),
$$
where the slow term is summed with exponent $\beta+2$ and the fast term is summable because $\beta>0$. The middle and high frequencies decay exponentially and are controlled by $u_0\in H^1$ and $u_1\in L^2$. This proves \eqref{ineq:1.4} for $\beta>0$. If $\beta=0$, then for $j\leq-3$, we obtain
$$
2^{2j}\mathrm{e}^{-ct2^{2j}}+\mathrm{e}^{-ct}\lesssim(1+t)^{-1}
$$
uniformly in $j$. For $t\leq1$ this is immediate, while for $t>1$ the first term is $t^{-1}(t2^{2j})\mathrm{e}^{-c(t2^{2j})}\lesssim t^{-1}$ and the second one is $O(t^{-1})$. Taking the $\ell^2$ norm of the low-frequency blocks and using $u_0,u_1\in L^2$, together with the exponential bounds at middle and high frequencies, proves \eqref{ineq:1.4} also when $\beta=0$.

It remains to justify the time continuity stated in Remark \ref{remark:hilbert-linear}. Fix $T>0$. The frequency estimates obtained above imply
$$
\sup_{0\leq t\leq T, \xi\in\mathbb{R}^n}\left|\widehat{K}_0(t,\xi)+\frac{1}{2}\widehat{K}_1(t,\xi)\right|<\infty,\quad
\sup_{0\leq t\leq T, \xi\in\mathbb{R}^n}\langle\xi\rangle^\alpha|\widehat{K}_1(t,\xi)|<\infty.
$$
Both multipliers are continuous in $t$ for every fixed $\xi$. Hence the Fourier transform of $u(t)-u(t_0)$, after multiplication by $\langle\xi\rangle^\alpha$, converges pointwise to zero and is dominated by a constant multiple of $\langle\xi\rangle^\alpha|\widehat{u}_0(\xi)|+|\widehat{u}_1(\xi)|$. Plancherel's theorem and dominated convergence therefore yield $u\in\mathcal{C}([0,\infty),H^\alpha)$.

If $u_0\in H^1$, the identity $\partial_t(\widehat{K}_0+\widehat{K}_1/2)=-|\xi|^2\widehat{K}_1$, together with the bounds
$$
\sup_{0\leq t\leq T, \xi\in\mathbb{R}^n}
\frac{|\xi|^2|\widehat{K}_1(t,\xi)|}{\langle\xi\rangle}<\infty,\quad
\sup_{0\leq t\leq T, \xi\in\mathbb{R}^n}
|\partial_t\widehat{K}_1(t,\xi)|<\infty
$$
show that the Fourier transform of $\partial_tu(t)-\partial_tu(t_0)$ is dominated in $L^2_\xi$ by a constant multiple of $\langle\xi\rangle|\widehat{u}_0(\xi)|+|\widehat{u}_1(\xi)|$. Dominated convergence gives $\partial_tu\in\mathcal{C}([0,\infty),L^2)$. This completes the proof of Theorem \ref{theorem:1.1} and Remark \ref{remark:hilbert-linear}.
\end{proof}

\section{Global-in-time existence of solutions} \label{section_5}
We write the solution as the sum of its linear part
$$
u^{\mathrm{lin}}(t,x):=\left[\left(K_0(t,\cdot{})+\frac{1}{2}K_1(t,\cdot{})\right)*u_0\right](x)+\left[K_1(t,\cdot{})*u_1\right](x),
$$
and its Duhamel part
$$
u^{\mathrm{non}}(t,x):=\int_0^t\left[K_1(t-s,\cdot{})*\left(\mathcal{I}_\gamma\left(|u(s,\cdot{})|^{p_1}\right)|u(s,\cdot{})|^{p_2}\right)\right](x)ds.
$$
For $T>0$, write $Nu:=u^{\mathrm{lin}}+u^{\mathrm{non}}$. The evolution space $X(T)$ is introduced at the beginning of the proof. We first verify that the Duhamel term is well defined in $H^\alpha$ and that $N$ maps $X(T)$ into itself. To prove Theorem \ref{theorem:1.3}, it is enough to establish
\begin{align} \label{ineq:5.1}
\left\|Nu\right\|_{X(T)} \lesssim & \left\|\left(u_0,u_1\right)\right\|_{\mathcal{D}}+\left\|u\right\|_{X(T)}^{p},\\
\label{ineq:5.2}
\left\|Nu-Nv\right\|_{X(T)} \lesssim & \left\|u-v\right\|_{X(T)}\left(\left\|u\right\|_{X(T)}^{p-1}+\left\|v\right\|_{X(T)}^{p-1}\right).
\end{align}
\begin{proof}[Proof of Theorem \ref{theorem:1.3}]
For $T>0$, let $X(T):=\mathcal{C}\left([0,T],H^\alpha\right)$ endowed with the norm
$$
\|u\|_{X(T)}:=\sup_{0\leq t\leq T}\Big[(1+t)^{\frac{\beta}{2}}\|u(t)\|_{L^2}+(1+t)^{\frac{\beta+\alpha}{2}}\|u(t)\|_{\dot{H}^\alpha}\Big].
$$
Since $\|f\|_{H^\alpha}\sim\|f\|_{L^2}+\|f\|_{\dot{H}^\alpha}$ and the two time weights are bounded above and below by positive constants on $[0,T]$, the norm of $X(T)$ is equivalent to the usual $\mathcal{C}([0,T],H^\alpha)$ norm. Thus $X(T)$ is complete. Applying the Hilbert-space estimate \eqref{ineq:1.3} in Remark \ref{remark:hilbert-linear}, first with $\alpha=0$ and then with the present value of $\alpha$, we obtain
\begin{align}
\label{eq:linear-L2}
\|u^{\mathrm{lin}}(t)\|_{L^2}
\lesssim& (1+t)^{-\frac{\beta}{2}}\left\|\left(u_0,u_1\right)\right\|_{\mathcal{D}},\\
\label{eq:linear-Halpha}
\|u^{\mathrm{lin}}(t)\|_{\dot{H}^\alpha}
\lesssim& (1+t)^{-\frac{\beta+\alpha}{2}}\left\|\left(u_0,u_1\right)\right\|_{\mathcal{D}}.
\end{align}
Therefore $\|u^{\mathrm{lin}}\|_{X(T)}\lesssim\left\|\left(u_0,u_1\right)\right\|_{\mathcal{D}}$.

We first derive the bounds for $K_1$ required in the Duhamel estimate. Let $1<q\leq2$ and $f\in L^q\cap L^2$. Then, for $0\leq\alpha\leq1$ and $\tau\geq0$,
\begin{align}
\label{eq:K1-Lq-L2-direct}
\|K_1(\tau)*f\|_{L^2}
\lesssim& (1+\tau)^{-\frac{n}{2}\left(\frac{1}{q}-\frac{1}{2}\right)}\|f\|_{L^q\cap L^2},\\
\label{eq:K1-Lq-Halpha-direct}
\|K_1(\tau)*f\|_{\dot{H}^\alpha}
\lesssim& (1+\tau)^{-\frac{n}{2}\left(\frac{1}{q}-\frac{1}{2}\right)-\frac{\alpha}{2}}\|f\|_{L^q\cap L^2}.
\end{align}
Let $0\leq\tau\leq1$. On $|\xi|\leq1$, the explicit formula for $\widehat{K}_1$, together with $|\sinh z|\leq |z|\cosh |z|$ for $|\xi|<1/2$ and $|\sin z|\leq|z|$ for $|\xi|>1/2$, gives $|\widehat{K}_1(\tau,\xi)|\lesssim\tau$; at $|\xi|=1/2$ we have $\tau\mathrm{e}^{-\tau/2}$. On $|\xi|\geq1$, the oscillatory formula and $\sqrt{4|\xi|^2-1}\gtrsim|\xi|$ give $|\widehat{K}_1(\tau,\xi)|\lesssim|\xi|^{-1}$. Since $0\leq\alpha\leq1$, it follows that
$$
\sup_{0\leq\tau\leq1}\sup_{\xi\in\mathbb{R}^n}
\left(1+|\xi|^\alpha\right)|\widehat{K}_1(\tau,\xi)|<\infty.
$$
Plancherel's theorem gives \eqref{eq:K1-Lq-L2-direct}--\eqref{eq:K1-Lq-Halpha-direct} on this time interval, since the right-hand sides contain the $L^2$ norm of $f$ and the time weights are comparable with one.

Let $\tau\geq1$. On $|\xi|\leq1/4$, we have $\tau_+(\xi)=-|\xi|^2/(1/2+\sqrt{1/4-|\xi|^2})\leq-|\xi|^2$, $\tau_-(\xi)=-1/2-\sqrt{1/4-|\xi|^2}\leq-1/2$, and $\tau_+(\xi)-\tau_-(\xi)=\sqrt{1-4|\xi|^2}\geq\sqrt3/2$. Hence the root formula for $\widehat{K}_1$ gives $|\widehat{K}_1(\tau,\xi)|\lesssim\mathrm{e}^{-c\tau|\xi|^2}$. Let $q'$ be conjugate to $q$ and $1/r_q=1/q-1/2$, with $r_q=\infty$ for $q=2$. Since $1/2=1/r_q+1/q'$, Plancherel's theorem, H\"older's inequality in Fourier space and the Hausdorff-Young inequality give
$$
\big\||\xi|^\alpha\widehat{K}_1(\tau,\xi)\widehat{f}(\xi)\big\|_{L^2(|\xi|\leq1/4)}\lesssim \big\||\xi|^\alpha\mathrm{e}^{-c\tau|\xi|^2}\big\|_{L^{r_q}}
\|\widehat{f}\|_{L^{q'}} \lesssim
\tau^{-\frac{n}{2}\left(\frac{1}{q}-\frac{1}{2}\right)-\frac{\alpha}{2}}\|f\|_{L^q}.
$$
If $r_q<\infty$, the change of variables $\eta=\sqrt\tau \xi$ yields
$$
\big\||\xi|^\alpha\mathrm{e}^{-c\tau|\xi|^2}\big\|_{L^{r_q}}
\leq C\tau^{-\frac{n}{2r_q}-\frac{\alpha}{2}}
=C\tau^{-\frac{n}{2}(\frac{1}{q}-\frac{1}{2})-\frac{\alpha}{2}},
$$
while for $r_q=\infty$ the same conclusion follows from $\sup_{\rho\geq0}\rho^\alpha\mathrm{e}^{-c\tau\rho^2}\lesssim\tau^{-\alpha/2}$. It remains to estimate the compact frequencies. If $1/4\leq|\xi|<1/2$, put $d:=\sqrt{1-4|\xi|^2}\in(0,\sqrt3/2]$. Using $|\sinh z|\leq |z|\cosh |z|$, we obtain $|\widehat{K}_1(\tau,\xi)|\leq \tau\mathrm{e}^{-\tau/2}\cosh(\tau d/2)\leq \tau\mathrm{e}^{-(1-d)\tau/2}\lesssim\mathrm{e}^{-c\tau}$ because $1-d\geq1-\sqrt3/2>0$. If $1/2<|\xi|\leq1$, then $|\sin z|\leq|z|$ gives $|\widehat{K}_1(\tau,\xi)|\leq\tau\mathrm{e}^{-\tau/2}\lesssim\mathrm{e}^{-c\tau}$, and the same bound holds at $|\xi|=1/2$ since $\widehat{K}_1(\tau,\xi)=\tau\mathrm{e}^{-\tau/2}$. Thus $|\xi|^\alpha|\widehat{K}_1(\tau,\xi)|\lesssim\mathrm{e}^{-c\tau}$ on $1/4\leq|\xi|\leq1$. For $|\xi|\geq1$, the oscillatory formula gives $|\xi|^\alpha|\widehat{K}_1(\tau,\xi)|\lesssim\mathrm{e}^{-\tau/2}|\xi|^{\alpha-1}\lesssim\mathrm{e}^{-\tau/2}$. Hence the contribution of $|\xi|\geq1/4$ is bounded by $C\mathrm{e}^{-c\tau}\|f\|_{L^2}$. Since $\mathrm{e}^{-c\tau}\lesssim(1+\tau)^{-A}$ for every fixed $A>0$, this proves \eqref{eq:K1-Lq-Halpha-direct}; \eqref{eq:K1-Lq-L2-direct} follows by taking $\alpha=0$.

We turn to the nonlinear term. Put
$$
m_\beta:=\frac{2n}{n+2\beta},\quad
2^*:=\begin{cases}
\infty,&n\leq2\alpha,\\
\dfrac{2n}{n-2\alpha},&n>2\alpha,
\end{cases}
\quad
F(u):=\mathcal{I}_\gamma(|u|^{p_1})|u|^{p_2},
$$
with the convention $1/2^*=0$ when $n\leq2\alpha$. For $q\in(1,2]$, define
$$
c_q:=\frac{1}{q}+\frac{\gamma}{n},\quad
L_q:=\max\left\{\frac{\gamma}{n},\frac{p_1}{2^*},c_q-1,c_q-\frac{p_2}{2}\right\},\quad
U_q:=\min\left\{1,\frac{p_1}{2},c_q-\frac{p_2}{2^*}\right\}.
$$
We choose $\theta_q$ so that
\begin{equation}
\label{eq:theta-conditions}
\frac{\gamma}{n}<\theta_q<1,\quad
\frac{p_1}{2^*}\leq\theta_q\leq\frac{p_1}{2},\quad
c_q-1<\theta_q<c_q,\quad
c_q-\frac{p_2}{2}\leq\theta_q\leq c_q-\frac{p_2}{2^*}.
\end{equation}

We first verify the existence of such a parameter for $q=m_\beta$ and $q=2$. Since $\beta<n/2$, we have $1<m_\beta\leq2$ and $c_2\leq c_q\leq c_{m_\beta}$. The lower assumptions in Theorem \ref{theorem:1.3} can be written as
$$
\frac{p_1}{2}>\frac{\gamma}{n},\quad
\frac{p_2}{2}>c_{m_\beta}-1,\quad
\frac{p}{2}>c_{m_\beta}.
$$
Moreover, $c_{m_\beta}-1<\gamma/n$ because $\beta<n/2$. If $n>2\alpha$, the upper assumptions give
$$
\frac{p_1}{2^*}<1,\quad
\frac{p_2}{2^*}<\frac{1}{2}\leq\frac{1}{q},\quad
\frac{p}{2^*}\leq c_2\leq c_q.
$$
If $n\leq2\alpha$, these three inequalities follow automatically from the convention $1/2^*=0$. Consequently,
\begin{align*}
\frac{\gamma}{n}
&<\min\left\{1,\frac{p_1}{2},c_q,c_q-\frac{p_2}{2^*}\right\}, \quad \frac{p_1}{2^*}<\min\left\{1,\frac{p_1}{2},c_q\right\},
\quad
\frac{p_1}{2^*}\leq c_q-\frac{p_2}{2^*},\\
c_q-1
&<\min\left\{1,\frac{p_1}{2},c_q,c_q-\frac{p_2}{2^*}\right\}, \quad c_q-\frac{p_2}{2}<\min\left\{1,\frac{p_1}{2},c_q,c_q-\frac{p_2}{2^*}\right\}.
\end{align*}
These inequalities compare each lower endpoint with every upper endpoint. Indeed, $\gamma/n$ is strictly smaller than all three upper endpoints because $\gamma/n<1$, $\gamma/n<p_1/2$ and $p_2/2^*<1/q$. Moreover, $p_1/2^*<1,p_1/2$ and $p_1/2^*\leq c_q-p_2/2^*$, where the last inequality is equivalent to $p/2^*\leq c_q$. Next, $c_q-1<1$ because $c_q-1\leq c_{m_\beta}-1<\gamma/n<1$, while $c_q-1<p_1/2$ and $c_q-1<c_q-p_2/2^*$ follow from $c_{m_\beta}-1<\gamma/n<p_1/2$ and $p_2/2^*<1$. Finally, $c_q-p_2/2<1$ follows from $p_2/2>c_{m_\beta}-1\geq c_q-1$, and $c_q-p_2/2<p_1/2$ follows from $p/2>c_{m_\beta}\geq c_q$; clearly $c_q-p_2/2<c_q-p_2/2^*$.

Hence $L_q\leq U_q$. All comparisons are strict except possibly $p_1/2^*\leq c_q-p_2/2^*$. Equality there means $p/2^*=c_q$. If $n\leq2\alpha$, the left-hand side is zero and equality is impossible because $c_q>0$. If $n>2\alpha$, the upper sum condition gives $p/2^*\leq c_2\leq c_q$, so equality can occur only for $q=2$ and $p/2^*=c_2$. In that case $\theta_2=p_1/2^*=c_2-p_2/2^*$. The strict upper bound on $p_2$ gives $\theta_2>\gamma/n$, whereas $p_1<2^*$ gives $\theta_2<1$. Moreover, $c_2-1<\gamma/n<\theta_2$ and $\theta_2<c_2$ because $p_2>0$. Thus all strict inequalities in \eqref{eq:theta-conditions} are satisfied also in the equality case. Therefore \eqref{eq:theta-conditions} admits a choice of $\theta_q$ for $q=m_\beta$ and $q=2$.

In the critical case, we also need an exponent slightly below $m_\beta$. Put $c_\beta:=c_{m_\beta}$. Since $c_\beta-1<\gamma/n$, the strict lower assumptions imply
$$
\frac{p_2}{2}-(c_\beta-1)>0,\quad
\frac{p}{2}-c_\beta>0,\quad
\frac{p_1}{2}-(c_\beta-1)>0.
$$
Choose $\beta_1$ so that
$$
0<\frac{\beta_1-\beta}{n}
<
\min\left\{
\frac{1}{2}-\frac{\beta}{n},
\frac{p_2}{2}-(c_\beta-1),
\frac{p}{2}-c_\beta,
\frac{p_1}{2}-(c_\beta-1)
\right\}
$$
and set $q_1:=2n/(n+2\beta_1)$. Then $\beta<\beta_1<n/2$, hence $1<q_1<m_\beta$, and
$c_{q_1}=c_\beta+(\beta_1-\beta)/n$. The preceding choice gives
$$
c_{q_1}-1<\frac{\gamma}{n}<\frac{p_1}{2},\quad
c_{q_1}-1<\frac{p_2}{2},\quad
c_{q_1}<\frac{p}{2}.
$$
The inequality $c_{q_1}-1<\gamma/n$ follows from $(\beta_1-\beta)/n<1/2-\beta/n$, while the other two follow directly from the last three restrictions in the choice of $\beta_1$. If $n>2\alpha$, the upper assumptions still give
$$
\frac{p_1}{2^*}<1,\quad
\frac{p_2}{2^*}<\frac{1}{2}<\frac{1}{q_1},\quad
\frac{p}{2^*}\leq c_2<c_{q_1}.
$$
When $n\leq2\alpha$, the same three comparisons hold with $1/2^*=0$: the left-hand sides are zero, while $c_2<c_{q_1}$. We check the endpoints explicitly. The inequalities above give $\gamma/n<1,p_1/2,c_{q_1}$, while $p_2/2^*<1/q_1$ implies $\gamma/n<c_{q_1}-p_2/2^*$. The lower endpoint $p_1/2^*$ is strictly below $1$ and $p_1/2$, and $p/2^*<c_{q_1}$ gives $p_1/2^*<c_{q_1}-p_2/2^*$. Next, $c_{q_1}-1<\gamma/n$ places this endpoint strictly below $1,p_1/2$ and $c_{q_1}$, while $p_2/2^*<1$ gives $c_{q_1}-1<c_{q_1}-p_2/2^*$. Finally, $c_{q_1}-p_2/2<1$ and $c_{q_1}-p_2/2<p_1/2$ follow from $c_{q_1}-1<p_2/2$ and $c_{q_1}<p/2$, and $c_{q_1}-p_2/2<c_{q_1}-p_2/2^*$. Hence every lower endpoint is strictly smaller than every upper endpoint. Thus $L_{q_1}<U_{q_1}$, and we may choose $\theta_{q_1}$ satisfying \eqref{eq:theta-conditions}.

For each exponent $q$ constructed above, set
\begin{equation}
\label{eq:chosen-exponents}
a_q:=\frac{p_1}{\theta_q},\quad
b_q:=\frac{p_2}{c_q-\theta_q},\quad
\frac{1}{r_{q,1}}:=\theta_q-\frac{\gamma}{n},\quad
\frac{1}{r_{q,2}}:=c_q-\theta_q.
\end{equation}
The second and fourth pairs of inequalities in \eqref{eq:theta-conditions} imply $2\leq a_q,b_q\leq2^*$, while the first and third pairs give
$0<\theta_q-\gamma/n<1$ and $0<c_q-\theta_q<1$, so $1<r_{q,1},r_{q,2}<\infty$. In addition,
$$
\frac{1}{r_{q,1}}+\frac{1}{r_{q,2}}=\frac{1}{q},\quad
\frac{1}{r_{q,1}}=\frac{p_1}{a_q}-\frac{\gamma}{n},\quad
\frac{1}{r_{q,2}}=\frac{p_2}{b_q}.
$$
In particular, $a_q/p_1=1/\theta_q\in(1,n/\gamma)$. Since $1/r_{q,1}=p_1/a_q-\gamma/n< p_1/a_q$, we also have $r_{q,1}>a_q/p_1$. Together with $1<r_{q,1}<\infty$, this verifies all assumptions of Proposition \ref{Hardy-Littlewood-Sobolev}. Hence, with input exponent $a_q/p_1$ and output exponent $r_{q,1}$, we obtain
$$
\left\|\mathcal{I}_\gamma(|u|^{p_1})\right\|_{L^{r_{q,1}}}
\lesssim
\left\||u|^{p_1}\right\|_{L^{a_q/p_1}}
=
\|u\|_{L^{a_q}}^{p_1},
$$
because $1/r_{q,1}=p_1/a_q-\gamma/n$. Since $1/r_{q,1}+1/r_{q,2}=1/q$ and $1/r_{q,2}=p_2/b_q$, H\"older's inequality gives
$$
\|F(u(t))\|_{L^q}
\lesssim
\|u(t)\|_{L^{a_q}}^{p_1}\|u(t)\|_{L^{b_q}}^{p_2}.
$$
We next apply Proposition \ref{Gagliardo} to $a_q$ and $b_q$. When $n\leq2\alpha$, both exponents are finite because $\theta_q>0$ and $c_q-\theta_q>0$. For every admissible finite $a$, with $\vartheta_a:=(n/\alpha)(1/2-1/a)$, we obtain
$$
\|u(t)\|_{L^a}
\lesssim
\|u(t)\|_{L^2}^{1-\vartheta_a}\|u(t)\|_{\dot{H}^\alpha}^{\vartheta_a}
\lesssim
(1+t)^{-\frac{\beta}{2}-\frac{\alpha\vartheta_a}{2}}\|u\|_{X(T)}
=
(1+t)^{-\frac{n+2\beta}{4}+\frac{n}{2a}}\|u\|_{X(T)}.
$$
Since $p_1/a_q+p_2/b_q=c_q=1/q+\gamma/n$, the total decay exponent in this product is
$$
\frac{p(n+2\beta)}{4}-\frac{n}{2}\left(\frac{p_1}{a_q}+\frac{p_2}{b_q}\right)=\frac{p(n+2\beta)}{4}-\frac{n}{2q}-\frac{\gamma}{2}.
$$
Therefore
\begin{equation*}
\|F(u(t))\|_{L^q}
\lesssim
(1+t)^{-\mu_q}\|u\|_{X(T)}^p,
\quad
\mu_q:=\frac{p(n+2\beta)}{4}-\frac{n}{2q}-\frac{\gamma}{2}.
\end{equation*}
For $q\leq2$, $\mu_2-\mu_q=\frac{n}{2}\left(\frac{1}{q}-\frac{1}{2}\right)\geq0$. Applying the preceding estimate with $q$ and with $q=2$ gives
\begin{equation}
\label{eq:nonlinear-intersection}
\|F(u(t))\|_{L^q\cap L^2}
\lesssim
(1+t)^{-\mu_q}\|u\|_{X(T)}^p.
\end{equation}
In addition,
\begin{equation*}
\mu_2=\frac{p(n+2\beta)-(n+2\gamma)}{4}.
\end{equation*}

Before estimating the Duhamel term, we verify that it is well defined as an $H^\alpha$-valued Bochner integral. Let $w,z\in H^\alpha$ with $\|w\|_{H^\alpha}+\|z\|_{H^\alpha}\leq M$. For $q=2$, the exponents chosen above satisfy $1/r_{2,1}=p_1/a_2-\gamma/n$, $1/r_{2,2}=p_2/b_2$ and $1/r_{2,1}+1/r_{2,2}=1/2$. Write $F(w)-F(z)=\mathcal{I}_\gamma(|w|^{p_1})(|w|^{p_2}-|z|^{p_2})+\mathcal{I}_\gamma(|w|^{p_1}-|z|^{p_1})|z|^{p_2}$. Proposition \ref{Hardy-Littlewood-Sobolev} maps $|w|^{p_1}\in L^{a_2/p_1}$ into $L^{r_{2,1}}$, and H\"older's inequality gives the first estimate below. For the second term, the pointwise power inequality and H\"older's inequality yield $\||w|^{p_1}-|z|^{p_1}\|_{L^{a_2/p_1}}\lesssim\|w-z\|_{L^{a_2}}(\|w\|_{L^{a_2}}^{p_1-1}+\|z\|_{L^{a_2}}^{p_1-1})$; applying Proposition \ref{Hardy-Littlewood-Sobolev} to this difference and then H\"older's inequality with $|z|^{p_2}\in L^{r_{2,2}}$ gives
\begin{align*}
\|\mathcal{I}_\gamma(|w|^{p_1})(|w|^{p_2}-|z|^{p_2})\|_{L^2}
\lesssim& \|w\|_{L^{a_2}}^{p_1}\|w-z\|_{L^{b_2}}
\left(\|w\|_{L^{b_2}}^{p_2-1}+\|z\|_{L^{b_2}}^{p_2-1}\right),\\
\|\mathcal{I}_\gamma(|w|^{p_1}-|z|^{p_1})|z|^{p_2}\|_{L^2}
\lesssim& \|w-z\|_{L^{a_2}}
\left(\|w\|_{L^{a_2}}^{p_1-1}+\|z\|_{L^{a_2}}^{p_1-1}\right)\|z\|_{L^{b_2}}^{p_2}.
\end{align*}
When $p_1=1$ or $p_2=1$, the corresponding zero-power factor is omitted, and therefore no $L^\infty$ estimate is needed. Since $H^\alpha\hookrightarrow L^{a_2}\cap L^{b_2}$, there exists $C>0$ such that $\|w\|_{L^{a_2}}+\|w\|_{L^{b_2}}+\|z\|_{L^{a_2}}+\|z\|_{L^{b_2}}\leq CM$ and $\|w-z\|_{L^{a_2}}+\|w-z\|_{L^{b_2}}\leq C\|w-z\|_{H^\alpha}$. Hence each right-hand side above is bounded by $C_M\|w-z\|_{H^\alpha}$, so $\|F(w)-F(z)\|_{L^2}\leq C_M\|w-z\|_{H^\alpha}$. Taking $z=0$ also gives $F(w)\in L^2$ for every $w\in H^\alpha$. Thus $F:H^\alpha\to L^2$ is locally Lipschitz, and $u\in X(T)$ implies $F(u)\in \mathcal{C}([0,T],L^2)$.

We shall also use the strong continuity of the propagator. For fixed $f\in L^2$ and $\tau_0\in[0,T]$, the multiplier $\widehat{K}_1(\tau,\xi)$ converges pointwise to $\widehat{K}_1(\tau_0,\xi)$ as $\tau\to\tau_0$, while the explicit formulas and the estimates above give $\sup_{0\leq\tau\leq T}(1+|\xi|^\alpha)|\widehat{K}_1(\tau,\xi)|\leq C_T$. After squaring, the Fourier integrand is dominated by $4C_T^2|\widehat{f}(\xi)|^2\in L^1$. Hence Plancherel's theorem and dominated convergence yield $\|(K_1(\tau)-K_1(\tau_0))*f\|_{H^\alpha}\to0$. To prove continuity of the Duhamel integrand, fix $s_0\in[0,t]$ and add and subtract $K_1(t-s)*F(u(s_0))$. The term $K_1(t-s)*(F(u(s))-F(u(s_0)))$ tends to zero in $H^\alpha$ because \eqref{eq:K1-Lq-L2-direct}--\eqref{eq:K1-Lq-Halpha-direct} with $q=2$ give a uniform $L^2\to H^\alpha$ bound for $K_1(\tau)$ on $[0,T]$ and $F(u(s))\to F(u(s_0))$ in $L^2$. The remaining term $(K_1(t-s)-K_1(t-s_0))*F(u(s_0))$ tends to zero by the strong continuity just established. Therefore $s\mapsto K_1(t-s)*F(u(s))$ belongs to $\mathcal{C}([0,t],H^\alpha)$ and is Bochner integrable. Thus $u^{\mathrm{non}}(t)$ is well defined in $H^\alpha$.

Assume first that the inequality in \eqref{eq:time-threshold} is strict. Since
$$
\mu_{m_\beta}
=\frac{(p-1)(n+2\beta)}{4}-\frac{\gamma}{2}>1,
\quad
\mu_2=\mu_{m_\beta}+\frac{\beta}{2}>1+\frac{\beta}{2},
$$
we use $q=m_\beta$ in \eqref{eq:K1-Lq-L2-direct}--\eqref{eq:K1-Lq-Halpha-direct} on $[0,t/2]$. Since $1+t-s\geq(1+t)/2$ on this interval, we obtain
\begin{align*}
\int_0^{t/2}\|K_1(t-s)*F(u(s))\|_{L^2}ds
\lesssim & (1+t)^{-\frac{\beta}{2}}\int_0^{t/2}(1+s)^{-\mu_{m_\beta}}ds \|u\|_{X(T)}^p \lesssim
(1+t)^{-\frac{\beta}{2}}\|u\|_{X(T)}^p,\\
\int_0^{t/2}\|K_1(t-s)*F(u(s))\|_{\dot{H}^\alpha}ds
\lesssim & (1+t)^{-\frac{\beta+\alpha}{2}}
\int_0^{t/2}(1+s)^{-\mu_{m_\beta}}ds \|u\|_{X(T)}^p \lesssim
(1+t)^{-\frac{\beta+\alpha}{2}}\|u\|_{X(T)}^p.
\end{align*}
On $[t/2,t]$, we use $q=2$. Since $1+s\sim1+t$ on this interval, \eqref{eq:nonlinear-intersection} gives
\begin{align*}
\int_{t/2}^t\|K_1(t-s)*F(u(s))\|_{L^2}ds \lesssim & (1+t)^{-\mu_2}\int_0^{t/2}1d\tau \|u\|_{X(T)}^p \lesssim
(1+t)^{-\frac{\beta}{2}}\|u\|_{X(T)}^p,\\
\int_{t/2}^t\|K_1(t-s)*F(u(s))\|_{\dot{H}^\alpha}ds
\lesssim & (1+t)^{-\mu_2}\int_0^{t/2}(1+\tau)^{-\frac{\alpha}{2}}d\tau \|u\|_{X(T)}^p
\lesssim (1+t)^{-\frac{\beta+\alpha}{2}}\|u\|_{X(T)}^p,
\end{align*}
where $\tau=t-s$; here we used $0<\alpha\leq1$.

We now consider the equality case in \eqref{eq:time-threshold}, namely $p=1+(4+2\gamma)/(n+2\beta)$. For the number $\beta_1$ chosen above, $1/q_1=(n+2\beta_1)/(2n)$. Substituting this relation and $p=1+(4+2\gamma)/(n+2\beta)$ into the definition of $\mu_q$ gives $\mu_{q_1}=1-(\beta_1-\beta)/2$ and $\mu_2=1+\beta/2$, while the definition of $q_1$ gives $(n/2)(1/q_1-1/2)=\beta_1/2$. Writing $\delta:=\beta_1-\beta>0$, we have $\mu_{q_1}=1-\delta/2$ and therefore
$$
\int_0^{t/2}(1+s)^{-\mu_{q_1}}ds
=\int_0^{t/2}(1+s)^{-1+\frac{\delta}{2}}ds
\leq \frac{2}{\delta}(1+t)^{\frac{\delta}{2}}
\lesssim(1+t)^{\frac{\beta_1-\beta}{2}}.
$$
Using $q=q_1$ on $[0,t/2]$ and applying \eqref{eq:K1-Lq-L2-direct}--\eqref{eq:K1-Lq-Halpha-direct}, we obtain
\begin{align*}
\int_0^{t/2}\|K_1(t-s)*F(u(s))\|_{L^2}ds \lesssim &
(1+t)^{-\frac{\beta_1}{2}+\frac{\beta_1-\beta}{2}}\|u\|_{X(T)}^p
=(1+t)^{-\frac{\beta}{2}}\|u\|_{X(T)}^p,\\
\int_0^{t/2}\|K_1(t-s)*F(u(s))\|_{\dot{H}^\alpha}ds
\lesssim & (1+t)^{-\frac{\beta_1+\alpha}{2}+\frac{\beta_1-\beta}{2}}\|u\|_{X(T)}^p
=(1+t)^{-\frac{\beta+\alpha}{2}}\|u\|_{X(T)}^p.
\end{align*}
On $[t/2,t]$, $1+s\sim1+t$ and $\mu_2=1+\beta/2$. With $\tau=t-s$, the $q=2$ estimates yield
\begin{align*}
\int_{t/2}^t\|K_1(t-s)*F(u(s))\|_{L^2}ds
\lesssim&
(1+t)^{-1-\frac{\beta}{2}}\int_0^{t/2}1d\tau \|u\|_{X(T)}^p
\lesssim(1+t)^{-\frac{\beta}{2}}\|u\|_{X(T)}^p,\\
\int_{t/2}^t\|K_1(t-s)*F(u(s))\|_{\dot{H}^\alpha}ds
\lesssim&
(1+t)^{-1-\frac{\beta}{2}}\int_0^{t/2}(1+\tau)^{-\frac{\alpha}{2}}d\tau \|u\|_{X(T)}^p \lesssim (1+t)^{-\frac{\beta+\alpha}{2}}\|u\|_{X(T)}^p.
\end{align*}
Together with \eqref{eq:linear-L2}--\eqref{eq:linear-Halpha}, these estimates prove \eqref{ineq:5.1}, with an implicit constant independent of $T$.

It remains to verify the time continuity of the Duhamel term. We have already shown that $F(u)\in \mathcal{C}([0,T],L^2)\subset L^1([0,T];L^2)$ and that $K_1(\tau):L^2\to H^\alpha$ is strongly continuous on $[0,T]$. Fix $t\in[0,T]$ and first let $h>0$ with $t+h\leq T$. Then
$$
u^{\mathrm{non}}(t+h)-u^{\mathrm{non}}(t)
=\int_0^t\left(K_1(t+h-s)-K_1(t-s)\right)*F(u(s))ds+\int_t^{t+h}K_1(t+h-s)*F(u(s))ds.
$$
For every $s\in[0,t]$, the first integrand converges to zero in $H^\alpha$ as $h\to0$ and its norm is bounded by $2C_T\|F(u(s))\|_{L^2}$. The Bochner dominated convergence theorem therefore implies that the first integral tends to zero in $H^\alpha$. The second integral is bounded by $C_T\int_t^{t+h}\|F(u(s))\|_{L^2}ds$ and tends to zero by absolute continuity. If $h<0$ and $t+h\geq0$, write the difference as the integral over $[0,t+h]$ of $K_1(t+h-s)-K_1(t-s)$ minus the integral over $[t+h,t]$ of $K_1(t-s)$; the same arguments apply. Hence $u^{\mathrm{non}}\in\mathcal{C}([0,T];H^\alpha)$.

It remains to prove the contraction estimate. Since $p_1,p_2\geq1$, we obtain
$$
F(u)-F(v)
=\mathcal{I}_\gamma(|u|^{p_1})\left(|u|^{p_2}-|v|^{p_2}\right)
+\mathcal{I}_\gamma\left(|u|^{p_1}-|v|^{p_1}\right)|v|^{p_2},
$$
and $\big||z|^a-|w|^a\big|\lesssim|z-w|(|z|^{a-1}+|w|^{a-1})$ for $a\geq1$. For the exponents in \eqref{eq:chosen-exponents}, the Hardy-Littlewood-Sobolev inequality gives $\|\mathcal{I}_\gamma(|u|^{p_1})\|_{L^{r_{q,1}}}\lesssim\|u\|_{L^{a_q}}^{p_1}$, while H\"older's inequality with $1/r_{q,2}=p_2/b_q$ yields
$\||u|^{p_2}-|v|^{p_2}\|_{L^{r_{q,2}}}\lesssim\|u-v\|_{L^{b_q}}(\|u\|_{L^{b_q}}^{p_2-1}+\|v\|_{L^{b_q}}^{p_2-1})$. For the second term, the pointwise power inequality gives
$\||u|^{p_1}-|v|^{p_1}\|_{L^{a_q/p_1}}\lesssim\|u-v\|_{L^{a_q}}(\|u\|_{L^{a_q}}^{p_1-1}+\|v\|_{L^{a_q}}^{p_1-1})$, after which the Hardy-Littlewood-Sobolev inequality maps $L^{a_q/p_1}$ into $L^{r_{q,1}}$. Combining these estimates with H\"older's inequality for the final product gives
\begin{align*}
\left\|\mathcal{I}_\gamma(|u|^{p_1})\left(|u|^{p_2}-|v|^{p_2}\right)\right\|_{L^q}
\lesssim&
\|u\|_{L^{a_q}}^{p_1}\|u-v\|_{L^{b_q}}
\left(\|u\|_{L^{b_q}}^{p_2-1}+\|v\|_{L^{b_q}}^{p_2-1}\right),\\
\left\|\mathcal{I}_\gamma\left(|u|^{p_1}-|v|^{p_1}\right)|v|^{p_2}\right\|_{L^q}
\lesssim&
\|u-v\|_{L^{a_q}}
\left(\|u\|_{L^{a_q}}^{p_1-1}+\|v\|_{L^{a_q}}^{p_1-1}\right)\|v\|_{L^{b_q}}^{p_2}.
\end{align*}
These formulas remain valid when $p_1=1$ or $p_2=1$. If $p_2>1$, the weighted Young inequality gives $$
A^{p_1}B^{p_2-1}=(A^{p-1})^{\frac{p_1}{p-1}}(B^{p-1})^{\frac{p_2-1}{p-1}}\leq\frac{p_1}{p-1}A^{p-1}+\frac{p_2-1}{p-1}B^{p-1},
$$
When $p_2=1$, the left-hand side is simply $A^{p-1}$. Likewise, if $p_1>1$, we have
$$
A^{p_1-1}B^{p_2}\leq\frac{p_1-1}{p-1}A^{p-1}+\frac{p_2}{p-1}B^{p-1},
$$
whereas for $p_1=1$ it equals $B^{p-1}$. Applying the same Gagliardo-Nirenberg estimate to $u$, $v$ and $u-v$ gives the same time exponent as for $F(u)$. For example, the first term has total exponent
$$
p_1\left(\frac{n+2\beta}{4}-\frac{n}{2a_q}\right)
+p_2\left(\frac{n+2\beta}{4}-\frac{n}{2b_q}\right)
=\mu_q,
$$
because $p_1/a_q+p_2/b_q=c_q=1/q+\gamma/n$; the second term has the same exponent. Applying the estimate for $q$ and for $2$ therefore gives
$$
\|F(u(t))-F(v(t))\|_{L^q\cap L^2}
\lesssim
(1+t)^{-\mu_q}\|u-v\|_{X(T)}
\left(\|u\|_{X(T)}^{p-1}+\|v\|_{X(T)}^{p-1}\right).
$$
For brevity, set $D_T(u,v):=\|u-v\|_{X(T)}\left(\|u\|_{X(T)}^{p-1}+\|v\|_{X(T)}^{p-1}\right)$.
In the supercritical case, we take $q=m_\beta$ on $[0,t/2]$. Since $\mu_{m_\beta}>1$ and $1+t-s\geq(1+t)/2$,
\begin{align*}
\int_0^{t/2}\|K_1(t-s)*(F(u(s))-F(v(s)))\|_{L^2}ds
\lesssim&(1+t)^{-\frac{\beta}{2}}\int_0^{t/2}(1+s)^{-\mu_{m_\beta}}ds D_T(u,v) \\
\lesssim &(1+t)^{-\frac{\beta}{2}}D_T(u,v),\\
\int_0^{t/2}\|K_1(t-s)*(F(u(s))-F(v(s)))\|_{\dot{H}^\alpha}ds
\lesssim&(1+t)^{-\frac{\beta+\alpha}{2}}\int_0^{t/2}(1+s)^{-\mu_{m_\beta}}ds D_T(u,v) \\
\lesssim &(1+t)^{-\frac{\beta+\alpha}{2}}D_T(u,v).
\end{align*}
In the critical case, we take $q=q_1$. Since $\mu_{q_1}=1-(\beta_1-\beta)/2$, we obtain
$$
\int_0^{t/2}(1+s)^{-\mu_{q_1}}ds\lesssim(1+t)^{\frac{\beta_1-\beta}{2}}.
$$
Consequently,
\begin{align*}
\int_0^{t/2}\|K_1(t-s)*(F(u(s))-F(v(s)))\|_{L^2}ds
\lesssim&(1+t)^{-\frac{\beta_1}{2}+\frac{\beta_1-\beta}{2}}D_T(u,v)
=(1+t)^{-\frac{\beta}{2}}D_T(u,v),\\
\int_0^{t/2}\|K_1(t-s)*(F(u(s))-F(v(s)))\|_{\dot{H}^\alpha}ds
\lesssim&(1+t)^{-\frac{\beta_1+\alpha}{2}+\frac{\beta_1-\beta}{2}}D_T(u,v)
=(1+t)^{-\frac{\beta+\alpha}{2}}D_T(u,v).
\end{align*}
On $[t/2,t]$, we take $q=2$. Since $1+s\sim1+t$ there and $\mu_2\geq1+\beta/2$ in both cases, with $\tau=t-s$,
\begin{align*}
\int_{t/2}^t\|K_1(t-s)*(F(u(s))-F(v(s)))\|_{L^2}ds
\lesssim&
(1+t)^{-\mu_2}\int_0^{t/2}1 d\tau D_T(u,v)\\
\lesssim&(1+t)^{1-\mu_2}D_T(u,v)
\lesssim(1+t)^{-\frac{\beta}{2}}D_T(u,v),\\
\int_{t/2}^t\|K_1(t-s)*(F(u(s))-F(v(s)))\|_{\dot{H}^\alpha}ds
\lesssim&
(1+t)^{-\mu_2}\int_0^{t/2}(1+\tau)^{-\frac{\alpha}{2}}d\tau D_T(u,v)\\
\lesssim&(1+t)^{1-\frac{\alpha}{2}-\mu_2}D_T(u,v)
\lesssim(1+t)^{-\frac{\beta+\alpha}{2}}D_T(u,v).
\end{align*}
Here we used $\int_0^{t/2}(1+\tau)^{-\alpha/2}d\tau\lesssim(1+t)^{1-\alpha/2}$ and $0<\alpha\leq1$. Taking the two weighted suprema proves \eqref{ineq:5.2}, with a constant independent of $T$.

Choose $R_*>0$ sufficiently small such that the constants in \eqref{ineq:5.1}--\eqref{ineq:5.2} satisfy $C_1(2R_*)^p\leq R_*$ and $2C_2(2R_*)^{p-1}<1$, and then choose $\varepsilon>0$ so that $C_1\varepsilon\leq R_*$. If $\|u\|_{X(T)}\leq2R_*$, then \eqref{ineq:5.1} gives $\|Nu\|_{X(T)}\leq C_1\varepsilon+C_1(2R_*)^p\leq2R_*$. Moreover, for $u,v$ in this ball, \eqref{ineq:5.2} yields
$$
\|Nu-Nv\|_{X(T)}
\leq2C_2(2R_*)^{p-1}\|u-v\|_{X(T)},
$$
and the coefficient is strictly smaller than one. Since $X(T)$ is a Banach space, the closed ball $\{u\in X(T):\|u\|_{X(T)}\leq2R_*\}$ is complete. The Banach fixed-point theorem therefore gives a unique fixed point in this ball on every finite interval. If $0<T_1<T_2$, the restriction to $[0,T_1]$ of the fixed point on $[0,T_2]$ still belongs to the same closed ball in $X(T_1)$ and satisfies the same integral equation; uniqueness of the fixed point on $[0,T_1]$ identifies the two solutions on this interval. Hence the finite-time fixed points are compatible and define a global mild solution.

We next prove uniqueness in the full class $\mathcal{C}([0,\infty),H^\alpha)$ stated in the theorem. Let $u$ and $v$ be two mild solutions with the same data and fix $T>0$. By continuity, $M:=\sup_{0\leq t\leq T}(\|u(t)\|_{H^\alpha}+\|v(t)\|_{H^\alpha})<\infty$. The local Lipschitz estimate established above gives
$$
\|F(u(t))-F(v(t))\|_{L^2}\leq C_M\|u(t)-v(t)\|_{H^\alpha},\quad 0\leq t\leq T.
$$
Subtracting the two mild identities, the linear parts cancel and $u(t)-v(t)=\int_0^tK_1(t-s)*(F(u(s))-F(v(s))) ds$. Moreover, \eqref{eq:K1-Lq-L2-direct} and \eqref{eq:K1-Lq-Halpha-direct} with $q=2$ give $\|K_1(\tau)*f\|_{H^\alpha}\leq C_T\|f\|_{L^2}$ for $0\leq\tau\leq T$. Hence
$$
\|u(t)-v(t)\|_{H^\alpha}
\leq C_{M,T}\int_0^t\|u(s)-v(s)\|_{H^\alpha}ds.
$$
Gronwall's inequality gives $u=v$ on $[0,T]$. Since $T$ is arbitrary, the global mild solution is unique.

Finally, let $u$ be the fixed point. The choice $C_1(2R_*)^p\leq R_*$ implies $C_1(2R_*)^{p-1}\leq1/2$. Hence \eqref{ineq:5.1} and $\|u\|_{X(T)}\leq2R_*$ give
$$
\|u\|_{X(T)}
\leq C_1\left\|\left(u_0,u_1\right)\right\|_{\mathcal{D}}
+C_1(2R_*)^{p-1}\|u\|_{X(T)}
\leq C_1\left\|\left(u_0,u_1\right)\right\|_{\mathcal{D}}
+\frac{1}{2}\|u\|_{X(T)}.
$$
Thus $\|u\|_{X(T)}\leq2C_1\left\|\left(u_0,u_1\right)\right\|_{\mathcal{D}}$ uniformly in $T$. By the definition of $X(T)$, this gives precisely
$\|u(t)\|_{L^2}\lesssim(1+t)^{-\beta/2}\|(u_0,u_1)\|_{\mathcal{D}}$ and
$\|u(t)\|_{\dot{H}^\alpha}\lesssim(1+t)^{-(\beta+\alpha)/2}\|(u_0,u_1)\|_{\mathcal{D}}$, namely \eqref{ineq:theorem:1.3.1}--\eqref{ineq:theorem:1.3.2}.
\end{proof}

\section{Nonexistence of global weak solutions} \label{section_6}
In this section, we prove Theorem \ref{theorem:1.5}. We begin with the definition of weak solutions to \eqref{eq:1.0}.
\begin{definition}[Weak solution] \label{def:weak}
Let $T\in(0,\infty]$, $p_1,p_2>0$ with $p_1+p_2>2$, $\gamma\in(0,n)$, and $u_0,u_1\in L_\mathrm{loc}^1$. Put $\widetilde{p}:=(p_1+p_2)/2>1$. A weak solution to \eqref{eq:1.0} on $[0,T)\times\mathbb{R}^n$ is a function $u\in L_\mathrm{loc}^{\widetilde{p}}\left([0,T)\times\mathbb{R}^n\right)$. The quantity $\mathcal{I}_\gamma\left(|u|^{p_1}\right)$ is understood as the nonnegative extended integral in \eqref{eq:Riesz}, and on the set $\{u=0\}$ we define the product $\mathcal{I}_\gamma\left(|u|^{p_1}\right)|u|^{p_2}$ to be zero. We require $\mathcal{I}_\gamma\left(|u|^{p_1}\right)|u|^{p_2}\in L_\mathrm{loc}^1\left([0,T)\times\mathbb{R}^n\right)$ and
\begin{align}
&\int_0^T\int_{\mathbb{R}^n}\mathcal{I}_\gamma\left(|u|^{p_1}\right)|u|^{p_2}\phi(t,x)dxdt+\int_{\mathbb{R}^n}\left(u_0(x)+u_1(x)\right)\phi(0,x)dx\notag\\
\label{eq:8.1}
&\quad-\int_{\mathbb{R}^n}u_0(x)\partial_t\phi(0,x)dx
=\int_0^T\int_{\mathbb{R}^n}u\left(\partial_t^2\phi-\Delta\phi-\partial_t\phi\right)dxdt
\end{align}
for every $\phi\in\mathcal{C}_0^\infty\left([0,T)\times\mathbb{R}^n\right)$. If $T=\infty$, then $u$ is called a global weak solution.
\end{definition}
We are now in a position to prove Theorem \ref{theorem:1.5}.
\begin{proof}[Proof of Theorem \ref{theorem:1.5}]
Assume, to the contrary, that $u=u(t,x)$ is a global weak solution. Set $\widetilde{p}:=(p_1+p_2)/2>1$ and let $\widetilde{q}$ be its conjugate exponent.

We first construct suitable cutoffs. Choose a nonincreasing function $\chi_0\in\mathcal{C}_0^\infty([0,\infty))$ and a radial function $\varphi_0\in\mathcal{C}_0^\infty(\mathbb{R}^n)$ such that $0\leq\chi_0,\varphi_0\leq1$, $\chi_0=1$ on $[0,1/2]$, $\chi_0=0$ on $[1,\infty)$, $\varphi_0=1$ on $\{|x|\leq1/2\}$ and $\varphi_0=0$ on $\{|x|\geq1\}$. Take an integer $k\geq2\widetilde{q}$ and set $\chi:=\chi_0^k$, $\varphi:=\varphi_0^k$. Since $\widetilde q>1$, one has $k\geq3$. Moreover, $\widetilde q(1-1/\widetilde p)=1$, so $\widetilde q(k-1)-k\widetilde q/\widetilde p=k-\widetilde q$ and $\widetilde q(k-2)-k\widetilde q/\widetilde p=k-2\widetilde q$. Using $|a+b|^{\widetilde q}\leq2^{\widetilde q-1}(|a|^{\widetilde q}+|b|^{\widetilde q})$ together with the identities 
$$
\chi'=k\chi_0^{k-1}\chi_0', \quad \chi''=k(k-1)\chi_0^{k-2}(\chi_0')^2+k\chi_0^{k-1}\chi_0'', \quad \Delta\varphi=k\varphi_0^{k-1}\Delta\varphi_0+k(k-1)\varphi_0^{k-2}|\nabla\varphi_0|^2
$$
give, on the sets where the cutoffs are positive,
$$
\chi^{-\frac{\widetilde q}{\widetilde p}}|\chi'|^{\widetilde q}
\lesssim \chi_0^{k-\widetilde q}\leq1, \quad \chi^{-\frac{\widetilde q}{\widetilde p}}|\chi''|^{\widetilde q}
\lesssim \chi_0^{k-2\widetilde q}+\chi_0^{k-\widetilde q}\leq C, \quad \varphi^{-\frac{\widetilde q}{\widetilde p}}|\Delta\varphi|^{\widetilde q}
\lesssim \varphi_0^{k-2\widetilde q}+\varphi_0^{k-\widetilde q}\leq C.
$$
Since $k\geq3$, the formulas above also show that $\chi'$, $\chi''$ and $\Delta\varphi$ vanish wherever the corresponding cutoff vanishes. Hence the quotients can be extended by zero on these sets, and all weighted derivative factors used below are bounded.

Let $R>1$ and define $\chi_R(t):=\chi(R^{-2}t)$, $\varphi_R(x):=\varphi(R^{-1}x)$ and $\phi_R(t,x):=\chi_R(t)\varphi_R(x)$. Set
$$
\mathcal{K}_R:=\int_0^{R^2}\int_{\mathcal{B}_R}\mathcal{I}_\gamma(|u|^{p_1})|u|^{p_2}\phi_R dxdt, \quad \mathcal{M}_R:=\int_0^{R^2}\int_{\mathcal{B}_R}|u|^{\widetilde p}\phi_R dxdt,
$$
where $\mathcal{B}_R:=\{x\in\mathbb{R}^n:|x|\leq R\}$. Both quantities are finite by Definition \ref{def:weak}. Moreover, $\phi_R\in\mathcal{C}_0^\infty([0,\infty)\times\mathbb{R}^n)$, $\operatorname{supp}\phi_R\subset[0,R^2]\times\mathcal{B}_R$, $\phi_R(0,x)=\varphi_R(x)$ and, because $\chi_R$ is constant near $t=0$, $\partial_t\phi_R(0,x)=0$. Thus $\phi_R$ is admissible in \eqref{eq:8.1}, and
\begin{align}
\mathcal{K}_R+\int_{\mathcal{B}_R}(u_0+u_1)\varphi_R dx
=&\int_0^{R^2}\int_{\mathcal{B}_R}u \partial_t^2\chi_R \varphi_R dxdt\notag\\
\label{ineq:testfunction}
&-\int_0^{R^2}\int_{\mathcal{B}_R}u \chi_R\Delta\varphi_R dxdt
-\int_0^{R^2}\int_{\mathcal{B}_R}u \partial_t\chi_R \varphi_R dxdt.
\end{align}
By \eqref{condition_u0u1}, the left-hand side is nonnegative. Hence it is bounded above by the sum of the absolute values of the three terms on the right.

We estimate these terms by H\"older's inequality. Factoring $\phi_R^{1/\widetilde p}$ from the term containing $|u|$, we obtain
\begin{align*}
\left|\int u \partial_t^2\chi_R \varphi_R\right|
\leq &
\mathcal{M}_R^{1/\widetilde p}
\left(\int\chi_R^{-\frac{\widetilde q}{\widetilde p}}
|\partial_t^2\chi_R|^{\widetilde q}\varphi_R\right)^{1/\widetilde q},\\
\left|\int u \chi_R\Delta\varphi_R\right|
\leq &
\mathcal{M}_R^{1/\widetilde p}
\left(\int\chi_R\varphi_R^{-\frac{\widetilde q}{\widetilde p}}
|\Delta\varphi_R|^{\widetilde q}\right)^{1/\widetilde q},\\
\left|\int u \partial_t\chi_R \varphi_R\right|
\leq &
\mathcal{M}_R^{1/\widetilde p}
\left(\int\chi_R^{-\frac{\widetilde q}{\widetilde p}}
|\partial_t\chi_R|^{\widetilde q}\varphi_R\right)^{1/\widetilde q},
\end{align*}
where all space-time integrals are taken over $(0,R^2)\times\mathcal{B}_R$. Since $\partial_t\chi_R=R^{-2}\chi'(R^{-2}t)$, $\partial_t^2\chi_R=R^{-4}\chi''(R^{-2}t)$ and $\Delta\varphi_R=R^{-2}(\Delta\varphi)(R^{-1}x)$, the changes of variables $t=R^2\widetilde t$, $x=R\widetilde x$, together with the weighted bounds above, give
\begin{align*}
\int\chi_R^{-\frac{\widetilde q}{\widetilde p}}|\partial_t^2\chi_R|^{\widetilde q}\varphi_R
=&R^{n+2-4\widetilde q}\int_0^1\int_{\mathcal{B}_1}\chi^{-\frac{\widetilde q}{\widetilde p}}|\chi''|^{\widetilde q}\varphi d\widetilde x d\widetilde t
\lesssim R^{n+2-4\widetilde q},\\
\int\chi_R\varphi_R^{-\frac{\widetilde q}{\widetilde p}}|\Delta\varphi_R|^{\widetilde q}
=&R^{n+2-2\widetilde q}\int_0^1\int_{\mathcal{B}_1}\chi\varphi^{-\frac{\widetilde q}{\widetilde p}}|\Delta\varphi|^{\widetilde q} d\widetilde x d\widetilde t
\lesssim R^{n+2-2\widetilde q},\\
\int\chi_R^{-\frac{\widetilde q}{\widetilde p}}|\partial_t\chi_R|^{\widetilde q}\varphi_R
=&R^{n+2-2\widetilde q}\int_0^1\int_{\mathcal{B}_1}\chi^{-\frac{\widetilde q}{\widetilde p}}|\chi'|^{\widetilde q}\varphi d\widetilde x d\widetilde t
\lesssim R^{n+2-2\widetilde q}.
\end{align*}
Therefore,
\begin{align}
\label{ineq:J_1}
\left|\int u \partial_t^2\chi_R \varphi_R\right|
\lesssim& \mathcal{M}_R^{1/\widetilde p}R^{-4+\frac{n+2}{\widetilde q}},\\
\left|\int u \chi_R\Delta\varphi_R\right|
+\left|\int u\partial_t\chi_R \varphi_R\right|
\lesssim& \mathcal{M}_R^{1/\widetilde p}R^{-2+\frac{n+2}{\widetilde q}}.
\label{ineq:J_3}
\end{align}
Since $R>1$, the first power of $R$ is bounded by the second.

We next derive a lower bound for $\mathcal{K}_R$. For almost every $t\in(0,R^2)$ and $x,y\in\mathcal{B}_R$, we have $|x-y|\leq2R$, and hence $|x-y|^{-(n-\gamma)}\geq(2R)^{-(n-\gamma)}$. Since the constant in \eqref{eq:Riesz} is positive, the definition of the Riesz potential gives
$$
\mathcal{I}_\gamma(|u(t,\cdot{})|^{p_1})(x)
\geq C R^{-(n-\gamma)}\int_{\mathcal{B}_R}|u(t,y)|^{p_1}dy,
\quad \text{for almost every }x\in\mathcal{B}_R.
$$
For almost every $t$, set $A_1(t):=\int_{\mathcal{B}_R}|u(t,x)|^{p_1}\phi_R(t,x)dx$ and $A_2(t):=\int_{\mathcal{B}_R}|u(t,x)|^{p_2}\phi_R(t,x)dx$. We also set
$$
\mathcal{H}_R(t):=\int_{\mathbb{R}^n}|u(t,x)|^{\widetilde p}\phi_R(t,x)dx \quad \text{ and } \quad G_R(t):=\int_{\mathcal{B}_R}\mathcal{I}_\gamma(|u|^{p_1})|u|^{p_2}\phi_R dx.
$$
By Tonelli's theorem and the finiteness of $\mathcal{M}_R$ and $\mathcal{K}_R$, both $\mathcal{H}_R(t)$ and $G_R(t)$ are finite for almost every $t$. If $\mathcal{H}_R(t)=0$, then $G_R(t)\geq C R^{-(n-\gamma)}\mathcal{H}_R(t)^2$ is immediate. Assume that $\mathcal{H}_R(t)>0$ and set $J_R(t):=\int_{\mathcal{B}_R}|u(t,y)|^{p_1}dy$. Then the set where $|u(t,x)|>0$ and $\phi_R(t,x)>0$ has positive measure, so $A_1(t)>0$, $A_2(t)>0$ and $J_R(t)\geq A_1(t)>0$. Multiplying the pointwise lower bound above by $|u(t,x)|^{p_2}\phi_R(t,x)$ and integrating in $x$ gives
$$
G_R(t)\geq C R^{-(n-\gamma)}J_R(t)A_2(t)
$$
in the extended nonnegative sense. Since $G_R(t)<\infty$ and $J_R(t),A_2(t)>0$, both $J_R(t)$ and $A_2(t)$ are finite. Hence $0<A_1(t)\leq J_R(t)<\infty$. Since $2\widetilde p=p_1+p_2$, we have
$$\mathcal{H}_R(t)=\int_{\mathcal{B}_R}(|u|^{p_1/2}\phi_R^{1/2})(|u|^{p_2/2}\phi_R^{1/2}) dx,
$$
and Cauchy-Schwarz gives
$$
\mathcal{H}_R(t)^2\leq A_1(t)A_2(t)\leq J_R(t)A_2(t).
$$
Therefore $G_R(t)\geq C R^{-(n-\gamma)}\mathcal{H}_R(t)^2$ for almost every $t$. Integrating in time yields
\begin{equation}
\label{ineq:44}
\mathcal{K}_R\geq C R^{-(n-\gamma)}\int_0^{R^2}\mathcal{H}_R(t)^2dt.
\end{equation}
Since $\mathcal{K}_R<\infty$, the last integral is finite. On the other hand, Cauchy-Schwarz in time gives
$$
\int_0^{R^2}\mathcal{H}_R(t)dt\leq R\left(\int_0^{R^2}\mathcal{H}_R(t)^2dt\right)^{1/2},
$$
and hence
\begin{equation}
\label{ineq:45}
\mathcal{M}_R^{1/\widetilde p}
=\left(\int_0^{R^2}\mathcal{H}_R(t)dt\right)^{1/\widetilde p}
\leq
R^{1/\widetilde p}
\left(\int_0^{R^2}\mathcal{H}_R(t)^2dt\right)^{1/(2\widetilde p)}.
\end{equation}

We finally estimate the initial term. For $R\geq4$, $\varphi_R=1$ on $\{|x|\leq R/2\}$. On the shell $R/4\leq|x|\leq R/2$, one has $\langle x\rangle\sim R$ and $\log(\mathrm{e}+|x|)\sim\log R$, while the measure of this shell is $|\mathcal{B}_1|\big((R/2)^n-(R/4)^n\big)\sim R^n$. Therefore \eqref{condition_u0u1} gives
\begin{equation}
\label{ineq:8.10}
\int_{\mathcal{B}_R}(u_0+u_1)\varphi_R dx
\gtrsim R^nR^{-\frac{n}{2}-\beta}(\log R)^{-1}
=R^{\frac{n}{2}-\beta}(\log R)^{-1}.
\end{equation}

Let $A_R:=\int_0^{R^2}\mathcal{H}_R(t)^2dt<\infty$. Combining \eqref{ineq:testfunction}, \eqref{ineq:J_1}--\eqref{ineq:J_3}, \eqref{ineq:44}, \eqref{ineq:45} and \eqref{ineq:8.10}, and absorbing the positive constants into the implicit constants, we obtain
$$
R^{-(n-\gamma)}A_R+C_0R^{\frac{n}{2}-\beta}(\log R)^{-1}
\leq
C A_R^{\frac{1}{2\widetilde p}}
R^{-2+\frac{n+2}{\widetilde q}+\frac{1}{\widetilde p}},
$$
where $C_0,C>0$ are independent of $R$. Multiplication by $R^{n-\gamma}$ gives
\begin{equation*}
A_R+C_0R^{\frac{3n}{2}-\beta-\gamma}(\log R)^{-1}
\leq
C A_R^{\frac{1}{2\widetilde p}}
R^{-2+n-\gamma+\frac{n+2}{\widetilde q}+\frac{1}{\widetilde p}}.
\end{equation*}
Young's inequality with conjugate exponents $2\widetilde p$ and $2\widetilde p/(2\widetilde p-1)$ yields
$$
C A_R^{1/(2\widetilde p)}R^{-2+n-\gamma+\frac{n+2}{\widetilde q}+\frac{1}{\widetilde p}}
\leq\frac{1}{2}A_R+C R^{\frac{2\widetilde p}{2\widetilde p-1}\left(-2+n-\gamma+\frac{n+2}{\widetilde q}+\frac{1}{\widetilde p}\right)}.
$$
Substituting this estimate into the preceding inequality, subtracting $A_R/2$ from both sides, and then dropping the remaining nonnegative term $A_R/2$ on the left, we obtain
$$
C_0R^{\frac{3n}{2}-\beta-\gamma}(\log R)^{-1}
\leq
C R^{\frac{2\widetilde p}{2\widetilde p-1}
\left(-2+n-\gamma+\frac{n+2}{\widetilde q}+\frac{1}{\widetilde p}\right)}.
$$
Since $1/\widetilde q=1-1/\widetilde p$, we have
$$
-2+n-\gamma+\frac{n+2}{\widetilde q}+\frac{1}{\widetilde p}
=2n-\gamma-\frac{2n+2}{p_1+p_2},
$$
and therefore the exponent on the right-hand side is
$$
\frac{p_1+p_2}{p_1+p_2-1}\left(2n-\gamma-\frac{2n+2}{p_1+p_2}\right).
$$
Moreover,
$$
\frac{3n}{2}-\beta-\gamma
-\frac{p_1+p_2}{p_1+p_2-1}\left(2n-\gamma-\frac{2n+2}{p_1+p_2}\right)
=
\frac{n+2\beta+4+2\gamma-(p_1+p_2)(n+2\beta)}{2(p_1+p_2-1)}.
$$
By \eqref{p_critical}, $p_1+p_2<1+(4+2\gamma)/(n+2\beta)$, equivalently $(p_1+p_2)(n+2\beta)<n+2\beta+4+2\gamma$. Hence
$$
\frac{n+2\beta+4+2\gamma-(p_1+p_2)(n+2\beta)}{2(p_1+p_2-1)}>0.
$$
After division by $R^{(3n)/2-\beta-\gamma}(\log R)^{-1}$, the right-hand side is bounded by
$$
CR^{-\frac{n+2\beta+4+2\gamma-(p_1+p_2)(n+2\beta)}{2(p_1+p_2-1)}}\log R,
$$
which tends to zero as $R\to\infty$. The left-hand side is the positive constant $C_0$, which is a contradiction. Hence no global weak solution exists.
\end{proof}

\section*{Acknowledgments}
The author gratefully acknowledges the financial support provided by the Banking Academy of Vietnam.

\appendix

\section{Some tools from Harmonic Analysis}\label{section_tools}

\begin{proposition}[Fractional Gagliardo-Nirenberg inequality, \cite{Molinet2011}] \label{Gagliardo}
Let $0<\alpha\leq1$. If $n\leq2\alpha$, let $2\leq r<\infty$, while if $n>2\alpha$, let $2\leq r\leq\frac{2n}{n-2\alpha}$. Then
$$
\left\|u\right\|_{L^r}\lesssim \left\|u\right\|_{L^2}^{1-\omega}\left\|u\right\|_{\dot{H}^\alpha}^{\omega},
\quad
\omega:=\frac{n}{\alpha}\left(\frac{1}{2}-\frac{1}{r}\right)\in[0,1].
$$
\end{proposition}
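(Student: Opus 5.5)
The plan is to factor the estimate through a homogeneous Sobolev space of intermediate order. Set $s:=n(1/2-1/r)$, so that $\omega=s/\alpha$. Two elementary ingredients then suffice:
\begin{itemize}
\item the Sobolev embedding $\dot{H}^{s}\hookrightarrow L^r$;
\item the interpolation inequality $\|u\|_{\dot{H}^{s}}\leq\|u\|_{L^2}^{1-s/\alpha}\|u\|_{\dot{H}^\alpha}^{s/\alpha}$.
\end{itemize}
Chaining them gives the claim immediately. Throughout we may assume $u\in L^2\cap\dot{H}^\alpha$, since otherwise the right-hand side is infinite.

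\emph{Checking the range of $s$.} We need $0\leq s\leq\alpha$ and, if $s>0$, also $s<n/2$.
\begin{itemize}
\item Since $r\geq2$, we have $s\geq0$. The endpoint $s=0$ corresponds to $r=2$, where the estimate is trivial with $\omega=0$.
\item If $n\leq2\alpha$ and $r<\infty$, then $s<n/2\leq\alpha$.
\item If $n>2\alpha$ and $r\leq 2n/(n-2\alpha)$, then $s\leq\alpha<n/2$.
\end{itemize}
In every case $\omega\in[0,1]$, as stated.

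\emph{Interpolation step.} Use Plancherel and write $|\xi|^{2s}|\widehat u|^2=(|\widehat u|^2)^{1-s/\alpha}(|\xi|^{2\alpha}|\widehat u|^2)^{s/\alpha}$. H\"older's inequality in $\xi$ with exponents $\alpha/(\alpha-s)$ and $\alpha/s$ then gives the interpolation inequality directly. This step is routine.

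\emph{Sobolev step.} This is the only substantive point, and the one I would treat most carefully. For $0<s<n/2$ set $g:=\mathcal{F}^{-1}(|\xi|^{s}\widehat u)\in L^2$. Then $u=\mathcal{I}_s g$ with $\mathcal{I}_s$ the Riesz potential \eqref{eq:Riesz} of order $s$, and the Hardy--Littlewood--Sobolev inequality (Proposition \ref{Hardy-Littlewood-Sobolev}, input exponent $2$, output exponent $r$ with $1/r=1/2-s/n$) yields $\|u\|_{L^r}\lesssim\|g\|_{L^2}=\|u\|_{\dot{H}^s}$.

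The subtlety is justifying the identity $u=\mathcal{I}_s g$. Because $u\in L^2$, $\widehat u$ is a genuine function, so there is no polynomial ambiguity. I would first prove the identity for Schwartz $u$ whose Fourier transform vanishes near the origin. For general $u$ I would approximate by such functions in $L^2\cap\dot{H}^\alpha$, which controls $\dot{H}^s$ by the interpolation step, and then pass to the limit in $L^r$ using Fatou's lemma or a.e.\ convergence along a subsequence.

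If one prefers to avoid HLS, an alternative is a Littlewood--Paley argument. Apply Bernstein's inequality blockwise, then split the frequencies at a level $2^{J}$ and optimize over $J$. This gives the weak-type bound, and real interpolation upgrades it to the strong bound. I would use this only as a fallback.
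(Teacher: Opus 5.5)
Your argument is correct. The paper gives no proof of this proposition; it is simply imported from the cited reference as a special case of the general fractional Gagliardo--Nirenberg inequalities. Your proposal is therefore a self-contained replacement rather than a reconstruction.

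Factoring through $\dot{H}^{s}$ with $s=\alpha\omega=n(1/2-1/r)$ is the natural $L^2$ route. The Fourier-side H\"older step gives $\|u\|_{\dot{H}^s}\leq\|u\|_{L^2}^{1-s/\alpha}\|u\|_{\dot{H}^\alpha}^{s/\alpha}$ with constant one. The embedding $\dot{H}^s\hookrightarrow L^r$ then follows from Proposition \ref{Hardy-Littlewood-Sobolev} with input exponent $2$; this is admissible exactly because $0<s<n/2$ forces $2<r<\infty$.

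Your range check shows that the two cases in the hypothesis encode precisely the conditions $s\leq\alpha$ and $s<n/2$. This includes the endpoint $\omega=1$, where $s=\alpha<n/2$.

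The density step works as you describe:
\begin{itemize}
\item functions with $\widehat{u}\in\mathcal{C}_0^\infty(\mathbb{R}^n\setminus\{0\})$ are dense in $H^\alpha$;
\item the bound $|\xi|^{2s}\leq1+|\xi|^{2\alpha}$ upgrades this to convergence in $\dot{H}^s$;
\item Fatou's lemma along an a.e.\ convergent subsequence transfers the estimate to $u$.
\end{itemize}
The paper only applies the inequality to $u(t)\in H^\alpha$, so this covers every use made of it.

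Quoting the literature buys generality, namely other integrability and regularity indices. Your proof buys transparency: it uses only Plancherel and a tool already stated in the appendix, and it shows why exactly these $r$ are admissible.

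The Littlewood--Paley fallback is unnecessary. Away from the endpoint ($0<s<\alpha$), Bernstein's inequality plus the frequency split already gives the strong bound directly through $\sum_j 2^{js}\|\dot{\Delta}_j u\|_{L^2}$. A weak-type-plus-interpolation argument would only be relevant at $s=\alpha$.
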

\begin{proposition}[Hardy-Littlewood-Sobolev inequality, \cite{Lieb1983}] \label{Hardy-Littlewood-Sobolev}
Let $0<\gamma<n$ and $1<m_2<m_1<\infty$ such that $\frac{1}{m_1}=\frac{1}{m_2}-\frac{\gamma}{n}$. Then there exists a constant $C=C(n,\gamma,m_2)>0$ such that
$$
\|\mathcal{I}_\gamma(f)\|_{L^{m_1}}\leq C\|f\|_{L^{m_2}}.
$$
\end{proposition}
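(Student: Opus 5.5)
This is the Hardy–Littlewood–Sobolev inequality, a classical result. The plan is to reduce it to a weak-type bound and then interpolate.

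\emph{Pointwise bound.} Since $\mathcal{I}_\gamma(f)=c_{n,\gamma}|x|^{-(n-\gamma)}*f$ with $c_{n,\gamma}>0$, and $|\mathcal{I}_\gamma f|\le \mathcal{I}_\gamma|f|$, we may take $f\ge0$ with $\|f\|_{L^{m_2}}=1$. Fix $x$ and a radius $\rho>0$, and split the integral into $|x-y|<\rho$ and $|x-y|\ge\rho$.
\begin{itemize}
\item On the near part, decompose into dyadic annuli $|x-y|\sim 2^{-k}\rho$. Since $\gamma>0$, this gives the Hedberg bound $\lesssim \rho^{\gamma}Mf(x)$, where $M$ is the Hardy–Littlewood maximal function.
\item On the far part, apply H\"older's inequality with exponent $m_2'$. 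We have $\||y|^{-(n-\gamma)}\mathbf 1_{|y|\ge\rho}\|_{L^{m_2'}}\sim\rho^{\gamma-n/m_2}$. This norm is finite exactly because $(n-\gamma)m_2'>n$, which is equivalent to $1/m_2-\gamma/n>0$, i.e.\ to $m_1<\infty$.
\end{itemize}
This yields
$$
|\mathcal{I}_\gamma f(x)|\lesssim \rho^\gamma Mf(x)+\rho^{\gamma-n/m_2}.
$$

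\emph{Optimizing and concluding.} Choose $\rho=Mf(x)^{-m_2/n}$ to balance the two terms. This gives
$$
|\mathcal{I}_\gamma f(x)|\lesssim Mf(x)^{1-\gamma m_2/n}=Mf(x)^{m_2/m_1},
$$
using the relation $1/m_1=1/m_2-\gamma/n$. Taking $L^{m_1}$ norms, we obtain
$$
\|\mathcal{I}_\gamma f\|_{L^{m_1}}\lesssim \|Mf\|_{L^{m_2}}^{m_2/m_1}\lesssim 1.
$$
The last step is the Hardy–Littlewood maximal theorem, which is where $m_2>1$ is essential. General $f$ then follows by homogeneity. The constant depends only on $n$, $\gamma$ and $m_2$, since $m_1$ is determined by them.

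\emph{Main obstacle.} The only delicate point is the near-part dyadic estimate. One must check that
$$
\sum_{k\ge0}(2^{-k}\rho)^{\gamma-n}\,(2^{-k}\rho)^n\,Mf(x)
$$
converges; it does since $\gamma>0$. One also needs the case $Mf(x)=\infty$ or $Mf(x)=0$, which occurs only on a null set or forces $f=0$. Alternatively, one could cite Lieb's sharp form directly, as the paper does with \cite{Lieb1983}.
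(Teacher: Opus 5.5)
Your proof is correct. It is Hedberg's argument, and the exponent bookkeeping checks out. The far-field kernel $|y|^{-(n-\gamma)}\mathbf 1_{|y|\ge\rho}$ lies in $L^{m_2'}$ exactly when $1/m_1>0$. The balancing choice $\rho=Mf(x)^{-m_2/n}$ gives $|\mathcal{I}_\gamma f(x)|\lesssim Mf(x)^{m_2/m_1}$. The maximal theorem then closes the estimate with a constant depending only on $n,\gamma,m_2$.

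The paper does not prove this proposition. It only cites Lieb, whose approach (rearrangement inequalities, compactness modulo symmetries, and conformal invariance in the diagonal case) gives existence of extremizers and sharp constants. That is far more than the paper needs: Section 3 uses Proposition \ref{Hardy-Littlewood-Sobolev} only as a boundedness statement with some constant, in the nonlinear estimate and the local Lipschitz estimate. Your route is self-contained and elementary modulo the Hardy--Littlewood maximal theorem, and it treats every admissible pair $(m_2,m_1)$ uniformly. The price is a non-sharp constant, which is irrelevant here.

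Two minor remarks. First, your opening sentence announces a reduction to a weak-type bound followed by interpolation, but you actually use the pointwise Hedberg inequality plus the strong $L^{m_2}$ bound for $M$; no interpolation of $\mathcal{I}_\gamma$ occurs. The Marcinkiewicz route is a valid alternative, but it is not the one you wrote. Second, say explicitly that your pointwise bound gives $\mathcal{I}_\gamma|f|(x)<\infty$ wherever $Mf(x)<\infty$, hence almost everywhere. So for $f\in L^{m_2}$ the convolution integral converges absolutely a.e., and the extension from $\mathcal{S}$ agrees with the integral formula. This matters because the paper applies the inequality to $|u|^{p_1}$ through that formula.
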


\end{document}